\newcommand{\argmax}{\mathop{\rm argmax}\limits}
\newcommand{\argmin}{\mathop{\rm argmin}\limits}
\newcommand{\dom}{\mathop{\rm dom}}
\newcommand{\prox}{\mathrm{prox}}
\newcommand{\dist}{\mathrm{dist}}
\newcommand{\diag}{\mathrm{diag}}
\newcommand{\sign}{\mathrm{sign}}
\newtheorem{theorem}{Theorem}[section]
\newtheorem{corollary}{Corollary}[section]
\newtheorem{lemma}{Lemma}[section]
\newtheorem{remark}{Remark}[section]
\newtheorem{proposition}{Proposition}[section]
\title{Proximal Diagonal Newton Methods for Composite Optimization Problems}
\author{Shotaro Yagishita\thanks{Department of Industrial and Systems Engineering, Chuo University, Japan, E-mail: a15.fjng@g.chuo-u.ac.jp}
\and
Shummin Nakayama\thanks{Info-Powered Energy System Research Center, The University of Electro-Communications, Japan, E-mail: snakayama@uec.ac.jp}
}
\date{\today}
\begin{document}

\maketitle

\begin{abstract}
This paper proposes new proximal Newton-type methods with a diagonal metric for solving composite optimization problems whose objective function is the sum of a twice continuously differentiable function and a proper closed directionally differentiable function.
Although proximal Newton-type methods using diagonal metrics have been shown to be superior to the proximal gradient method numerically, no theoretical results have been obtained to suggest this superiority.
Even though our proposed method is based on a simple idea, its convergence rate suggests an advantage over the proximal gradient method in certain situations.
Numerical experiments show that our proposed algorithms are effective, especially in the nonconvex case.
\end{abstract}

\section{Introduction}\label{sec:intro}
In this paper, algorithms solving the following optimization problems are considered:
\begin{align}\label{problem:general}
\underset{x\in\mathbb{R}^n}{\mbox{minimize}} & \quad F(x)\coloneqq f(x)+g(x),
\end{align}
where $f:\mathbb{R}^n\to\mathbb{R}$ is twice continuously differentiable, $g:\mathbb{R}^n\to(-\infty,\infty]$ is a proper closed directionally differentiable function, and $F$ is bounded below.
Since many problems in machine learning, signal processing, and statistical inference can be formulated as above, optimization problems of the form \eqref{problem:general} receive a lot of interest.
Proximal gradient methods (PGMs, also called forward-backward splitting methods) are popular iterative methods for solving composite optimization problems \citep{parikh2014proximal}.
When $g$ has a simple structure (e.g., $\ell_1$-norm), each iteration of the PGMs is executed efficiently.
While the usefulness of the PGMs is well known, convergence can be slow for ill-conditioned problems.
In the last decade, proximal Newton-type methods (PNMs, also called variable metric forward-backward splitting methods) have attracted attention as a possible solution to this issue \citep{becker2012quasi,lee2014proximal}.
The method is also known as the successive quadratic approximation method \citep{byrd2016inexact}.
By using an approximation of $\nabla^2f$ as a metric in each iteration, the number of iterations of the PNMs tends to be reduced compared to the PGMs. 
On the other hand, in general, solving the subproblem of the PNMs, that is, the scaled proximal mapping of $g$ cannot avoid iterative methods, even if $g$ is equipped with a simple structure.
Thus, the computational cost of PNMs per iteration is generally more expensive than that of the PGM.

Recently, PNMs with metrics restricted to diagonal matrices have been proposed \citep{park2020variable,bonettini2016variable}.
They only consider convex $g$'s, but the scaled proximal mapping can be easily computed for some nonconvex $g$.
Thus, there is no need to use an iterative method to compute the scaled proximal mapping for simple $g$.
Numerical experiments have reported advantages of the PNMs with diagonal metrics compared to the PGMs.
However, to the best of our knowledge, there is no theoretical result that implies the superiority of the PNMs with diagonal metrics over the PGMs.

In this paper, we propose new PNMs that use the diagonal part of the Hessian matrix of $f$ as the metric.
Although the proposed methods are simple, they have theoretical properties indicating superiority to the PGMs.
The theoretical results tell us for which problems our proposed methods are superior to the PGMs.
In addition, the scaled proximal mappings of some nonconvex functions are calculated explicitly.
Numerical experiments demonstrate the effectiveness of our algorithm, especially when $g$ is nonconvex.

The rest of this paper is organized as follows.
The remainder of this section is devoted to related works, notation, and preliminary results.
In the next section, we introduce our proposed algorithm, the proximal diagonal Newton method (PDNM), show some convergence results, and also explain the cases where the PDNM is superior to the PGMs.
Section \ref{sec:prox-calculus} is devoted to the calculation of the scaled proximal mapping with diagonal metrics of some nonconvex functions.
Some numerical experiments to demonstrate the advantage of PDNM are reported in Section \ref{sec:experiments}.
Finally, Section \ref{sec:conclusion} concludes the paper with some remarks.

\subsection{Related works}
Variable metric forward-backward splitting methods (i.e., PNMs) appeared earlier in \citep{chen1997convergence}.
In PNMs, how to solve the scaled proximal mapping, which appears in the subproblem, is the most crucial issue in practical use.
\citet{becker2019quasi} has shown that the scaled proximal mapping of a proper closed convex function can be reduced to an $r$-dimensional equation when the metric is of the form diagonal $\pm$ rank-$r$, and proposed a method for solving the equation.
The subproblem of the PNMs is solved exactly by performing the bisection method for a finite number of points when $r=1$, $g$ is proper closed convex and separable, and the proximal mapping of each univariate function is piecewise affine \citep{becker2012quasi,karimi2017imro}.
In each iteration of general PNMs for convex $g$, the scaled proximal mappings are calculated inexactly by using a numerical method (see, e.g., \citep{byrd2016inexact,scheinberg2016practical,li2017inexact,lee2019inexact,nakayama2021inexact,kanzow2021globalized}).
Recently, for difference-of-convex (DC) $g$, inexact PNMs combined with DC decomposition have been proposed \citep{liu2022inexact,nakayama2023inexact}.

The use of a diagonal matrix as an approximation of the Hessian matrix of $f$ appeared in \citep{gill1979conjugate,gilbert1989some} for the smooth unconstrained problem (i.e., $g=0$).
\citet{zhu1999quasi} proposed update rules for the metrics based on a certain variational principle.
Recently, superlinear convergence of a Newton-type method with an inverse diagonal BFGS update when $f$ is separable and strongly convex is established \citep{li2022diagonal}.
Diagonal metrics are also used in a first order primal-dual algorithm \citep{pock2011diagonal}, the Douglas-Rachford splitting \citep{giselsson2014diagonal}, subgradient methods \citep{duchi2011adaptive}, etc. other than the PNMs.

\subsection{Notation and Preliminaries}
For an integer $n$, the set $[n]$ is defined by $[n]\coloneqq\{1,\ldots,n\}$.
The largest eigenvalue and smallest eigenvalue of a square matrix $A$ are denoted by $\lambda_{\max}(A)$ and $\lambda_{\min}(A)$, respectively.
For symmetric matrices $A$ and $B$, $A\succeq B$ ($A\succ B$) means that $A-B$ is a positive semidefinite (definite) matrix.
Let $I$ and $e_i$ denote the identity matrix of appropriate size and the vector whose $i$-th element is $1$ and 0 otherwise, respectively.
For a matrix $A$, by $\diag(A)$ we denote the diagonal matrix whose diagonal elements coincide with those of $A$.
The standard inner product of $x,y\in\mathbb{R}^n$ is denoted by $x^\top y$.
The $\ell_1$ norm and the $\ell_2$ norm of $x\in\mathbb{R}^n$ are defined by $\|x\|_1\coloneqq\sum_{i\in[n]}|x_i|$ and $\|x\|_2\coloneqq\sqrt{x^\top x}$, respectively.
We define $\|x\|_A\coloneqq\sqrt{x^\top Ax}$ for a positive definite matrix $A$ and $x\in\mathbb{R}^n$.
For a matrix $A$, $\|A\|_{2,2}$ denotes the operator norm that is defined by $\|A\|_{2,2}\coloneqq\max\{\|Ax\|_2\mid\|x\|_2\le1\}$.
We denote $\max\{\xi,0\}$ by $(\xi)_+$.
Let $\dist(x,\mathcal{X})$ denote the distance between $x\in\mathbb{R}^n$ and $\mathcal{X}\subset\mathbb{R}^n$, that is, $\dist(x,\mathcal{X})\coloneqq\inf\{\|x-y\|_2\mid y\in\mathcal{X}\}$.

The domain of $g$ is denoted by $\dom g$.
For $x\in\dom g$, $\mathcal{F}_g(x)$ is the feasible cone of $\dom g$ at $x$, that is, $\mathcal{F}_g(x)\coloneqq\{d\in\mathbb{R}^n\mid x+\eta d\in\dom g,~ \eta\in(0,\delta)~ \mbox{for some}~ \delta>0\}$.
The function $g$ is directionally differentiable if the directional derivative of $g$ at $x$ in direction $d$ exists for any $x\in\dom g$ and $d\in\mathcal{F}_g(x)$, that is,
\begin{align}
g'(x;d)\coloneqq\lim_{\eta\searrow 0}\frac{g(x+\eta d)-g(x)}{\eta}\in[-\infty,\infty].
\end{align}
We call $x^*\in\dom g$ a d(irectional)-stationary point of \eqref{problem:general} if $F'(x^*;d)=\nabla f(x^*)^\top d+g'(x^*;d)\ge0$ holds for all $d\in\mathcal{F}_g(x^*)$.
For given $\varepsilon>0$, a point $x^*\in\dom g$ is said to be a $\varepsilon$-stationary point (in the sense of d-stationarity) if $F'(x^*;d)\ge-\varepsilon\|d\|_2$ holds for all $d\in\mathcal{F}_g(x^*)$.

For a positive definite matrix $A$, the scaled proximal mapping of $g$ at $x$ is defined by
\begin{align}
    \prox_g^A(x)\coloneqq\argmin_{y}\Big\{g(y)+\frac{1}{2}\|y-x\|_A^2\Big\}.
\end{align}
If $\prox_g^A(x)$ is a singleton, we denote its element by the same symbol.
We call $G:\mathbb{R}^n\to\mathbb{R}^n$ ($H:\mathbb{R}^n\to\mathbb{R}^{n\times n}$) Lipschitz continuous if there exists $L\ge0$ such that
\begin{align}
\|\nabla G(x)-\nabla G(y)\|_2\le L\|x-y\|_2 \quad (\|H(x)-H(y)\|_{2,2}\le L\|x-y\|_2)
\end{align}
for any $x, y\in\mathbb{R}^n$.
A function $f$ is said to be $m$-strongly convex with respect to a norm $\|\cdot\|$ for $m>0$ if it holds that
\begin{align}
    f(\lambda x+(1-\lambda)y)\le\lambda f(x)+(1-\lambda)f(y)-\frac{m}{2}\lambda(1-\lambda)\|x-y\|^2
\end{align}
for any $x, y\in\mathbb{R}^n$ and $\lambda\in[0,1]$.
When the corresponding norm is the $\ell_2$ norm, we simply say that $f$ is $m$-strongly convex.

\begin{lemma}\label{lem:f-p-g-ineq}
Suppose that $g$ is convex.
Let $\beta>0,~ x,y\in\mathbb{R}^n$, $H$ be a positive definite matrix, and $x^+\in\prox_g^H(x-H^{-1}\nabla f(x))$.
If it holds that
\begin{align}\label{eq:p-line-search}
f(x^+)\le f(x)+\nabla f(x)^\top(x^+-x)+\frac{\beta}{2}\|x^+-x\|_{H}^2,
\end{align}
then we have
\begin{align}
    F(y)-F(x^+)\ge\frac{1}{2}\|y-x^+\|_{H}^2-\frac{1}{2}\|y-x\|_{H}^2+\ell_f(y,x)-\frac{\beta-1}{2}\|x^+-x\|_{H}^2,
\end{align}
where $\ell_f(y,x)\coloneqq f(y)-f(x)-\nabla f(x)^\top(y-x)$.
\end{lemma}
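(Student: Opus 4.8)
The plan is to combine the first-order optimality condition of the proximal subproblem (using convexity of $g$), the definition of $\ell_f$, the assumed descent inequality \eqref{eq:p-line-search}, and a three-point expansion of the $H$-norm.

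First I would record the optimality condition for $x^+\in\prox_g^H(x-H^{-1}\nabla f(x))$. Since $g$ is convex, $x^+$ minimizes $z\mapsto g(z)+\frac12\|z-(x-H^{-1}\nabla f(x))\|_H^2$, so $0\in\partial g(x^+)+H(x^+-x)+\nabla f(x)$, i.e., $-H(x^+-x)-\nabla f(x)\in\partial g(x^+)$. The subgradient inequality for $g$ at $x^+$ then gives
\begin{align}
g(y)-g(x^+)\ge -(x^+-x)^\top H(y-x^+)-\nabla f(x)^\top(y-x^+).
\end{align}

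Next I would bound the smooth part. By the definition of $\ell_f$ we have $f(y)=f(x)+\nabla f(x)^\top(y-x)+\ell_f(y,x)$, while \eqref{eq:p-line-search} upper-bounds $f(x^+)$ by $f(x)+\nabla f(x)^\top(x^+-x)+\frac{\beta}{2}\|x^+-x\|_H^2$; subtracting yields
\begin{align}
f(y)-f(x^+)\ge \nabla f(x)^\top(y-x^+)+\ell_f(y,x)-\frac{\beta}{2}\|x^+-x\|_H^2.
\end{align}
Adding the two displays, the terms $\nabla f(x)^\top(y-x^+)$ cancel, leaving
\begin{align}
F(y)-F(x^+)\ge \ell_f(y,x)-\frac{\beta}{2}\|x^+-x\|_H^2-(x^+-x)^\top H(y-x^+).
\end{align}

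Finally I would eliminate the cross term via the identity $\|y-x\|_H^2=\|y-x^+\|_H^2+2(y-x^+)^\top H(x^+-x)+\|x^+-x\|_H^2$, which rearranges to $-(x^+-x)^\top H(y-x^+)=\frac12\|y-x^+\|_H^2-\frac12\|y-x\|_H^2+\frac12\|x^+-x\|_H^2$. Substituting this and merging the two $\|x^+-x\|_H^2$ terms produces precisely the claimed inequality, with coefficient $-\frac{\beta-1}{2}$ on $\|x^+-x\|_H^2$. No step is genuinely hard; the only thing to watch is the bookkeeping of signs in the subgradient inequality and in the three-point expansion so that the final coefficient comes out correctly, and making sure the $\nabla f(x)^\top(y-x^+)$ terms indeed cancel rather than double up.
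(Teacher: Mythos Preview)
Your proof is correct and follows essentially the same route as the paper's. The paper phrases the optimality of $x^+$ as $1$-strong convexity (in $\|\cdot\|_H$) of the subproblem objective and invokes the quadratic-growth inequality, but once the quadratic terms are expanded that inequality is exactly your subgradient inequality $g(y)-g(x^+)\ge -(x^+-x)^\top H(y-x^+)-\nabla f(x)^\top(y-x^+)$; the remaining combination with \eqref{eq:p-line-search} and the three-point identity is the same algebra in a slightly different order.
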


\begin{proof}
Since the function $g+\frac{1}{2}\|\cdot-(x-H^{-1}\nabla f(x))\|_{H}^2$ is $1$-strongly convex with respect to $\|\cdot\|_H$, it follow from \citet[Theorem 5.25]{beck2017first} that
\begin{align}
    g(y)+\frac{1}{2}\|y-(x-H^{-1}\nabla f(x))\|_{H}^2-g(x^+)-\frac{1}{2}\|x^+-(x-H^{-1}\nabla f(x))\|_{H}^2\ge\frac{1}{2}\|y-x^+\|_{H}^2
\end{align}
Combining this with \eqref{eq:p-line-search}, we have
\begin{align}
    &g(y)-g(x^+)\\
    &\ge\frac{1}{2}\|y-x^+\|_{H}^2-\frac{1}{2}\|y-x\|_{H}^2-\nabla f(x)^\top(y-x)+\frac{1}{2}\|x^+-x\|_{H}^2+\nabla f(x)^\top(x^+-x)\\
    &\ge\frac{1}{2}\|y-x^+\|_{H}^2-\frac{1}{2}\|y-x\|_{H}^2+\frac{1}{2}\|x^+-x\|_{H}^2-\nabla f(x)^\top(y-x)+f(x^+)-f(x)-\frac{\beta}{2}\|x^+-x\|_{H}^2.
\end{align}
We obtain the desired result by adding $f(y)-f(x^+)$ to both sides.
\end{proof}

This paper uses the following quantities in the convergence analysis:
\begin{align}
    \sigma\coloneqq\inf_{x,v\neq0,0\le\lambda\le1}\frac{v^\top\nabla^2f(x+\lambda v)v}{v^\top\diag(\nabla^2f(x))v},\quad\tau\coloneqq\sup_{x,v\neq0,0\le\lambda\le1}\frac{v^\top\nabla^2f(x+\lambda v)v}{v^\top\diag(\nabla^2f(x))v}.
\end{align}
The above quantities $\sigma, \tau$ are well-defined when $\diag(\nabla^2f(x))\succ O$ for any $x\in\mathbb{R}^n$.
Clearly, $\sigma\le\tau$.
If $\nabla f$ is Lipschitz continuous with constant $L_1$ and there exists $m>0$ such that $\diag(\nabla^2f(x))\succeq mI$ for any $x\in\mathbb{R}^n$, since $\|\nabla^2f(x+\lambda v)\|_{2,2}\le L_1$ \citep[Theorem 5.12]{beck2017first}, it holds that
\begin{align}
    v^\top\nabla^2f(x+\lambda v)v &\le\lambda_{\max}(\nabla^2f(x+\lambda v))\|v\|_2^2\\
    &\le\|\nabla^2f(x+\lambda v)\|_{2,2}\|v\|_2^2\\
    &\le L_1\|v\|_2^2\\
    &\le\frac{L_1}{m}v^\top\diag(\nabla^2f(x))v,
\end{align}
which implies that 
\begin{align}\label{eq:ineq-tau}
    0\le\tau\le\frac{L_1}{m}<\infty.
\end{align}
Let us assume, in addition, that $f$ is m-strongly convex.
Since $\nabla f$ is Lipschitz continuous with constant $L_1$ if and only if $\|\nabla^2f(x)\|_{2,2}\le L_1$ for any $x\in\mathbb{R}^n$ \citep[Theorem 5.12]{beck2017first}, it holds that
\begin{align}\label{eq:boundedness-diag}
    (\nabla^2f(x))_{i,i}\le\|\nabla^2f(x)e_i\|_2\le L_1\|e_i\|_2=L_1,
\end{align}
which yields
\begin{align}
    v^\top\diag(\nabla^2f(x))v\le L_1\|v\|_2^2\le\frac{L_1}{m}v^\top\nabla^2f(x+\lambda v)v.
\end{align}
As a result, we obtain that
\begin{align}\label{eq:ineq-sigma}
    \sigma\ge\frac{m}{L_1}>0.
\end{align}

\section{Proximal Diagonal Newton Method}\label{sec:PDNM}
This section introduces PNMs with a new diagonal metric and investigates their convergence.
Letting $x^0\in\dom g, \eta>1, \beta>0$, our first proposed method (Algorithm \ref{alg:PDNM}) repeats
\begin{align}\label{eq:prox-grad-PDNM}
    x^{t+1}\in\prox_g^{\eta^kD_t}\left(x^t-(\eta^kD_t)^{-1}\nabla f(x^t)\right)
\end{align}
with $k=k_t$ and $D_t=\diag(\nabla^2f(x^t))\succ O$, where $k_t$ is the smallest nonnegative integer $k$ that satisfies
\begin{align}\label{eq:line-search}
    f(x^{t+1})\le f(x^t)+\nabla f(x^t)^\top(x^{t+1}-x^t)+\frac{\beta}{2}\|x^{t+1}-x^t\|_{\eta^kD_t}^2.
\end{align}
We refer to it as the proximal diagonal Newton method (PDNM).
The bottleneck of \eqref{eq:prox-grad-PDNM} is the diagonal scaled proximal mapping of $g$, which is easily performed if $g$ has a simple structure, even when it is nonconvex.
The scaled proximal mappings with diagonal metrics of several convex functions have been given by \citet{park2019variable}.
Examples of the diagonal scaled proximal mappings of nonconvex functions are provided in Section \ref{sec:prox-calculus}.

\begin{algorithm}[H]
\caption{Proximal diagonal Newton method for the problem \eqref{problem:general}}
    \label{alg:PDNM}
    \begin{algorithmic}
    \STATE {\bfseries Input:} $x^0\in\dom g,~ \eta>1,~ \beta>0$ and $t=0$.
    \REPEAT
    \STATE Calculate $H_t=\diag(\nabla^2f(x^t))$.
    \STATE Compute $x^{t+1}\in\prox_g^{H_t}\left(x^t-H_t^{-1}\nabla f(x^t)\right)$.
    \WHILE{$f(x^{t+1})>f(x^t)+\nabla f(x^t)^\top(x^{t+1}-x^t)+\frac{\beta}{2}\|x^{t+1}-x^t\|_{H_t}^2$}
    \STATE Set $H_t\leftarrow\eta H_t$.
    \STATE Compute $x^{t+1}\in\prox_g^{H_t}\left(x^t-H_t^{-1}\nabla f(x^t)\right)$.
    \ENDWHILE
    \STATE Set $t\leftarrow t+1$.
    \UNTIL Terminated criterion is satisfied.
  \end{algorithmic}
\end{algorithm}

Under the assumptions of Lipschitz continuity of the gradient of $f$ and uniformly positive definiteness of $\diag(\nabla^2f(x))$, each iteration of Algorithm \ref{alg:PDNM} is well-defined.
\begin{lemma}\label{lem:well-def-1}
Suppose that $\nabla f$ is Lipschitz continuous with constant $L_1$ and there exists $m>0$ such that $\diag(\nabla^2f(x))\succeq mI$.
Then, $k_t$ is well-defined and it holds that
\begin{align}\label{eq:boundedness-hessian-1}
    \eta^{k_t}\le\max\left\{1,\frac{\eta\tau}{\beta}\right\}, \quad mI\preceq\eta^{k_t}D_t\preceq\max\left\{1,\frac{\eta\tau}{\beta}\right\}L_1I
\end{align}
for any $t$, where $D_t=\diag(\nabla^2f(x^t))$.
\end{lemma}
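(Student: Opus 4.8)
The plan is to control the second-order behaviour of $f$ along the step $x^{t+1}-x^t$ by $\|x^{t+1}-x^t\|_{D_t}^2$ with proportionality constant exactly $\tau$; this makes the Armijo-type test \eqref{eq:line-search} pass as soon as $\beta\eta^k\ge\tau$, and the stated bounds then drop out of the minimality of $k_t$ together with $D_t\succeq mI$ and $D_t\preceq L_1 I$.

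First I would fix $t$ and take the candidate point $x^{t+1}$ produced by \eqref{eq:prox-grad-PDNM} at scaling level $k$, writing $v\coloneqq x^{t+1}-x^t$. Applying the Lagrange form of Taylor's theorem to $s\mapsto f(x^t+sv)$ on $[0,1]$ yields some $\lambda\in[0,1]$ with
\[
f(x^{t+1})=f(x^t)+\nabla f(x^t)^\top v+\tfrac12\,v^\top\nabla^2f(x^t+\lambda v)\,v .
\]
Since $D_t=\diag(\nabla^2f(x^t))\succeq mI\succ O$, the definition of $\tau$ gives $v^\top\nabla^2f(x^t+\lambda v)v\le\tau\,v^\top D_tv$ when $v\neq0$ (and trivially when $v=0$), so
\[
f(x^{t+1})\le f(x^t)+\nabla f(x^t)^\top v+\tfrac{\tau}{2}\|v\|_{D_t}^2\le f(x^t)+\nabla f(x^t)^\top v+\tfrac{\beta\eta^{k}}{2}\|v\|_{D_t}^2
\]
whenever $\beta\eta^k\ge\tau$. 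As $\tau<\infty$ by \eqref{eq:ineq-tau} (whose hypotheses are exactly those assumed here) and $\eta>1$, some nonnegative integer $k$ satisfies $\beta\eta^k\ge\tau$; hence the while loop terminates and $k_t$ is well-defined.

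For the bound on $\eta^{k_t}$ I would split into two cases. If $\tau\le\beta$, then $k=0$ already satisfies $\beta\eta^0\ge\tau$, so $k_t=0$ and $\eta^{k_t}=1\le\max\{1,\eta\tau/\beta\}$. If $\tau>\beta$, then $k_t\ge1$, so by minimality the test \eqref{eq:line-search} fails at $k=k_t-1$; the implication just proved forces $\beta\eta^{k_t-1}<\tau$, i.e. $\eta^{k_t}<\eta\tau/\beta\le\max\{1,\eta\tau/\beta\}$. This establishes the first inequality of \eqref{eq:boundedness-hessian-1}. It remains to sandwich $\eta^{k_t}D_t$: since $k_t\ge0$ we have $\eta^{k_t}\ge1$, hence $\eta^{k_t}D_t\succeq\eta^{k_t}mI\succeq mI$ by $D_t\succeq mI$; and since $\nabla f$ is Lipschitz with constant $L_1$ we have $\|\nabla^2f(x^t)\|_{2,2}\le L_1$ \citep[Theorem 5.12]{beck2017first}, so $(\nabla^2f(x^t))_{i,i}=e_i^\top\nabla^2f(x^t)e_i\le L_1$ for all $i$, i.e. $D_t\preceq L_1 I$, which with $\eta^{k_t}\le\max\{1,\eta\tau/\beta\}$ gives $\eta^{k_t}D_t\preceq\max\{1,\eta\tau/\beta\}L_1 I$.

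The argument is essentially routine; the only step requiring a little care is the case distinction $\tau\le\beta$ versus $\tau>\beta$ when converting the minimality of $k_t$ into the upper estimate on $\eta^{k_t}$, together with recalling that finiteness of $\tau$ — which is what makes the line search terminate — has already been recorded in \eqref{eq:ineq-tau} under precisely these hypotheses.
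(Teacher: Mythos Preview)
Your argument follows the paper's proof almost verbatim: Taylor's theorem with Lagrange remainder, the definition of $\tau$ to control the quadratic term, minimality of $k_t$, and then the sandwich via $D_t\succeq mI$ and \eqref{eq:boundedness-diag}.

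There is one small logical slip in your case split. The implication ``if $\tau>\beta$ then $k_t\ge1$'' is not justified: the condition $\beta\eta^k\ge\tau$ is only \emph{sufficient} for the test \eqref{eq:line-search} to pass, not necessary, so the test may well succeed at $k=0$ even when $\tau>\beta$. The paper avoids this by splitting on $k_t=0$ versus $k_t>0$ directly (rather than on the size of $\tau$ relative to $\beta$): if $k_t=0$ then $\eta^{k_t}=1$; if $k_t>0$ then failure at $k_t-1$ forces $\beta\eta^{k_t-1}<\tau$. Your conclusion is unaffected, since in the case you overlooked ($\tau>\beta$ but $k_t=0$) one still has $\eta^{k_t}=1\le\max\{1,\eta\tau/\beta\}$; only the wording of the case distinction needs adjusting.
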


\begin{proof}
From Taylor's theorem, there exists $\lambda\in(0,1)$ such that
\begin{align}
    f(x^{t+1})=f(x^t)+\nabla f(x^t)^\top(x^{t+1}-x^t)+\frac{1}{2}\|x^{t+1}-x^t\|_{\nabla^2f(x^t+\lambda(x^{t+1}-x^t))}^2.
\end{align}
If $k$ satisfies $\beta\eta^k\ge\tau$, it follow from the assumptions and definition of $\tau$ that
\begin{align}
    f(x^{t+1}) &\le f(x^t)+\nabla f(x^t)^\top(x^{t+1}-x^t)+\frac{\tau}{2}\|x^{t+1}-x^t\|_{D_t}^2\\
    &\le f(x^t)+\nabla f(x^t)^\top(x^{t+1}-x^t)+\frac{\beta}{2}\|x^{t+1}-x^t\|_{\eta^kD_t}^2.
\end{align}
Thus, $k_t$ is well-defined.
Obviously $\eta^{k_t}=1$ when $k_t=0$.
If $k_t>0$, then the acceptance criterion \eqref{eq:line-search} is not satisfied for $k=k_t-1$ and hence $\beta\eta^{k_t-1}<\tau$.
Consequently, we have
\begin{align}\label{eq:boundedness-power}
    \eta^{k_t}\le\max\left\{1,\frac{\eta\tau}{\beta}\right\}.
\end{align}
Combining this with \eqref{eq:boundedness-diag} yields
\begin{align}
    mI\preceq\eta^{k_t}D_t\preceq\max\left\{1,\frac{\eta\tau}{\beta}\right\}L_1I,
\end{align}
which is the desired result.
\end{proof}

The assumption of uniformly positive definiteness of $\diag(\nabla^2f(x))$ is not restrictive.
Actually, considering the sparse regression problem, that is, $f(x)=\frac{1}{2}\|b-Ax\|_2^2$ and $g$ is a function inducing sparsity such as the $\ell_1$ norm, although $f$ is not a strongly convex function in general, the diagonal components of $\nabla^2f$ are constant and positive since we can assume that each column of $A$ is not a zero-vector without loss of generality.

We first show the global convergence of the PDNM without convexity assumption.

\begin{theorem}\label{thm:global-convergence-1}
Suppose that all the assumptions of Lemma \ref{lem:well-def-1} are satisfied and $0<\beta<1$ and $\{x^t\}_{t=0}^\infty$ is generated by Algorithm \ref{alg:PDNM} without termination.
Then, $\|x^{t+1}-x^t\|_2$ converges to $0$ and any accumulation point of $\{x^t\}_{t=0}^\infty$ is a d-stationary point of \eqref{problem:general}.
Moreover, it holds that $\lim_{t\to\infty}\dist(x^t, \mathcal{X}^*)=0$ if $\{x^t\}_{t=0}^\infty$ is bounded, where $\mathcal{X}^*$ is the set of d-stationary points of \eqref{problem:general}.
If $g$ is convex, the above results also hold for $1\le\beta<2$.
\end{theorem}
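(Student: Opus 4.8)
The plan is to first establish a sufficient-decrease estimate, deduce $\|x^{t+1}-x^t\|_2\to0$, then show every accumulation point is d-stationary, and finally upgrade this to the distance statement; convexity of $g$ is needed only to widen the admissible range of $\beta$, via Lemma~\ref{lem:f-p-g-ineq}.

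\emph{Step 1: sufficient decrease.} Write $H_t$ for the metric $\eta^{k_t}D_t$ accepted at iteration $t$ and $u^t\coloneqq x^t-H_t^{-1}\nabla f(x^t)$. Since $x^{t+1}$ is a global minimizer of $g(\cdot)+\frac12\|\cdot-u^t\|_{H_t}^2$, comparing its value at $x^{t+1}$ and at $x^t$ and expanding the two squared norms (the common term $\nabla f(x^t)^\top H_t^{-1}\nabla f(x^t)$ cancels) gives
\begin{align}
g(x^{t+1})-g(x^t)\le-\tfrac12\|x^{t+1}-x^t\|_{H_t}^2-\nabla f(x^t)^\top(x^{t+1}-x^t).
\end{align}
Adding the accepted line-search inequality \eqref{eq:line-search} for $k=k_t$ yields $F(x^{t+1})-F(x^t)\le\frac{\beta-1}{2}\|x^{t+1}-x^t\|_{H_t}^2$, a strict decrease when $0<\beta<1$. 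If instead $g$ is convex, I would apply Lemma~\ref{lem:f-p-g-ineq} with $x=y=x^t$, $H=H_t$, $x^+=x^{t+1}$, whose hypothesis \eqref{eq:p-line-search} is precisely \eqref{eq:line-search} and for which $\ell_f(x^t,x^t)=0$; this gives $F(x^t)-F(x^{t+1})\ge\frac{2-\beta}{2}\|x^{t+1}-x^t\|_{H_t}^2$, a decrease for $\beta<2$. In either case, using $H_t\succeq mI$ from Lemma~\ref{lem:well-def-1}, there is $c>0$ with $F(x^t)-F(x^{t+1})\ge c\|x^{t+1}-x^t\|_2^2$. Since $F$ is bounded below and $\{F(x^t)\}$ is nonincreasing it converges, and telescoping gives $\sum_t\|x^{t+1}-x^t\|_2^2<\infty$, hence $\|x^{t+1}-x^t\|_2\to0$.

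\emph{Step 2: accumulation points are d-stationary.} Let $x^{t_j}\to\bar x$; by Step~1 also $x^{t_j+1}\to\bar x$. Properness and closedness of $g$, the bound $F(x^{t_j+1})\le F(x^0)$, and continuity of $f$ give $\bar x\in\dom g$ and $\liminf_j g(x^{t_j+1})\ge g(\bar x)$. Fix $d\in\mathcal{F}_g(\bar x)$ and $\eta>0$ small enough that $\bar x+\eta d\in\dom g$. Comparing the subproblem value at $x^{t_j+1}$ with that at $\bar x+\eta d$ and again expanding the squares so that the $\nabla f(x^{t_j})^\top H_{t_j}^{-1}\nabla f(x^{t_j})$ terms cancel, I obtain an inequality whose remaining terms involve only $\|x^{t_j+1}-x^{t_j}\|_{H_{t_j}}^2$, $\|\bar x+\eta d-x^{t_j}\|_{H_{t_j}}^2$, and inner products with $\nabla f(x^{t_j})$. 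Using $\|x^{t_j+1}-x^{t_j}\|_2\to0$, $x^{t_j}\to\bar x$, boundedness of $\{H_t\}$ (Lemma~\ref{lem:well-def-1}), continuity of $\diag(\nabla^2f(\cdot))$ — passing to a further subsequence along which $H_{t_j}\to\bar H$ — and $\liminf_j g(x^{t_j+1})\ge g(\bar x)$, the inequality passes to the limit as
\begin{align}
g(\bar x)\le g(\bar x+\eta d)+\tfrac{\eta^2}{2}\,d^\top\bar H d+\eta\,\nabla f(\bar x)^\top d.
\end{align}
Dividing by $\eta$ and letting $\eta\searrow0$ gives $g'(\bar x;d)+\nabla f(\bar x)^\top d\ge0$, i.e.\ $F'(\bar x;d)\ge0$; as $d\in\mathcal{F}_g(\bar x)$ was arbitrary, $\bar x$ is d-stationary.

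\emph{Step 3: distance to $\mathcal{X}^*$, and the main obstacle.} If $\{x^t\}$ is bounded it has an accumulation point, so $\mathcal{X}^*\ne\emptyset$ by Step~2. If $\dist(x^t,\mathcal{X}^*)\not\to0$, there are $\varepsilon>0$ and a subsequence with $\dist(x^{t_j},\mathcal{X}^*)\ge\varepsilon$; by Bolzano--Weierstrass it has a convergent sub-subsequence whose limit lies in $\mathcal{X}^*$ by Step~2, contradicting $\dist(\cdot,\mathcal{X}^*)\ge\varepsilon$. Hence $\dist(x^t,\mathcal{X}^*)\to0$. The delicate point is Step~2 for nonconvex $g$: one cannot pass directly to the limit in first-order optimality conditions because neither $g$ nor $\dom g$ is convex and $\{H_{t_j}\}$ need not converge. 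The remedy is to argue from the global-minimality characterization of the proximal step (which makes the $H_{t_j}^{-1}$-dependent terms cancel) and to extract a further subsequence along which the bounded metrics $H_{t_j}$ converge; for the convex case, the only extra ingredient is the decrease estimate from Lemma~\ref{lem:f-p-g-ineq}, which supplies sufficient decrease for $1\le\beta<2$.
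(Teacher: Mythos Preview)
Your proposal is correct and follows essentially the same route as the paper: the paper derives the same sufficient-decrease inequalities \eqref{eq:sufficient-descent-1} and \eqref{eq:strong-sufficient-descent-1} and then invokes a general lemma (Lemma~\ref{lem:global-convergence} in the appendix) whose proof is exactly your Steps~2--3. The only cosmetic difference is that the paper, instead of extracting a subsequence with $H_{t_j}\to\bar H$, replaces the $H_{t_i}$-norms by the uniform scalar bounds $m_1\|\cdot\|_2^2$ and $m_2\|\cdot\|_2^2$ before passing to the limit, which yields the same conclusion without the extra subsequence.
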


\begin{proof}
It follows from the optimality of $x^{t+1}$ to the subproblem that
\begin{align}
    g(x^{t+1})+\frac{1}{2}\|x^{t+1}-x^t\|_{\eta^{k_t}D_t}^2+\nabla f(x^t)^\top(x^{t+1}-x^t)\le g(x^t),
\end{align}
where $D_t=\diag(\nabla^2f(x^t))$.
We obtain from the acceptance criterion \eqref{eq:line-search} and the above that
\begin{align}\label{eq:sufficient-descent-1}
    F(x^{t+1})\le F(x^t)-\frac{1-\beta}{2}\|x^{t+1}-x^t\|_{\eta^{k_t}D_t}^2.
\end{align}
Since \eqref{eq:boundedness-hessian-1} and \eqref{eq:sufficient-descent-1} imply that the assumptions of Lemma \ref{lem:global-convergence} (see Appendix \ref{sec:lemma}) are satisfied, we have the desired result.

If $g$ is convex, by using Lemma \ref{lem:f-p-g-ineq} with $x=x^t,~ y=x^t,~ H=\eta^{k_t}D_t$, we obtain
\begin{align}\label{eq:strong-sufficient-descent-1}
    F(x^{t+1})\le F(x^t)-\frac{2-\beta}{2}\|x^{t+1}-x^t\|_{\eta^{k_t}D_t}^2.
\end{align}
Thus, the convergence results hold for $0<\beta<2$.
\end{proof}

Under the assumptions of Theorem \ref{thm:global-convergence-1}, let us consider the following terminated criterion:
\begin{align}\label{eq:termination}
    \|\nabla f(x^t)-\nabla f(x^{t-1})+\eta^{k_{t-1}}D_{t-1}(x^{t-1}-x^t)\|_2\le\varepsilon
\end{align}
for some $\varepsilon>0$.
We see from the Lipschitz continuity of $\nabla f$ and boundedness of $\eta^{k_{t-1}}D_{t-1}$ that
\begin{align}
    &\|\nabla f(x^t)-\nabla f(x^{t-1})+\eta^{k_{t-1}}D_{t-1}(x^{t-1}-x^t)\|_2\\
    &\le\|\nabla f(x^t)-\nabla f(x^{t-1})\|_2+\|\eta^{k_{t-1}}D_{t-1}\|_{2,2}\|x^t-x^{t-1}\|_2\\
    &\le L_1\|x^t-x^{t-1}\|_2+\max\left\{1,\frac{\eta\tau}{\beta}\right\}L_1\|x^t-x^{t-1}\|_2\\
    &=\max\left\{2,1+\frac{\eta\tau}{\beta}\right\}L_1\|x^t-x^{t-1}\|_2.
\end{align}
From this and $\lim_{t\to\infty}\|x^t-x^{t-1}\|_2=0$, the termination criterion \eqref{eq:termination} must hold at some $t$.
In addition, for any $d\in\mathcal{F}_g(x^t)$, since it follows from the d-stationarity of $x^t$ to subproblem that
\begin{align}
    g'(x^t;d)+\left(\eta^{k_{t-1}}D_{t-1}(x^t-x^{t-1})+\nabla f(x^{t-1})\right)^\top d\ge0,
\end{align}
the termination criterion \eqref{eq:termination} implies that
\begin{align}
    F'(x^t;d) &=\nabla f(x^t)^\top d+g'(x^t;d)\\
    &\ge\left(\nabla f(x^t)-\nabla f(x^{t-1})+\eta^{k_{t-1}}D_{t-1}(x^{t-1}-x^t)\right)^\top d\\
    &\ge-\|\nabla f(x^t)-\nabla f(x^{t-1})+\eta^{k_{t-1}}D_{t-1}(x^{t-1}-x^t)\|_2\|d\|_2\\
    &\ge-\varepsilon\|d\|_2.
\end{align}
As a result, it is guaranteed that $\varepsilon$-stationary point of \eqref{problem:general} can be obtained by a finite number of iterations of Algorithm \ref{alg:PDNM} with termination criterion \eqref{eq:termination}.

In the following, $\mathcal{O}(1/t^p)$ sublinear convergence rate in the objective function values is established under the convexity assumption.

\begin{theorem}\label{thm:sub-linear-convergence-1}
Let $0<\beta\le1$ and $\{x^t\}_{t=0}^\infty$ be a sequence generated by Algorithm \ref{alg:PDNM} without termination.
We suppose that $f$ and $g$ are convex, $\nabla f$ is Lipschitz continuous with constant $L_1$, and there exist $m>0$ such that $\diag(\nabla^2f(x^t))\succeq mI$ and an optimal solution $x^*$ of \eqref{problem:general}.
Assume further that
\begin{align}\label{eq:hessian-gap}
    C\coloneqq\sup_{t\ge1}\frac{\sum_{s=0}^{t-2}\left(\|x^{s+1}-x^*\|_{D_{s+1}/M_{s+1}}^2-\|x^{s+1}-x^*\|_{D_s/M_s}^2\right)}{t^{1-p}}<\infty
\end{align}
for some $p\in(0,1]$, then it holds that
\begin{align}\label{eq:sub-linear-convergence-1}
    F(x^t)-F(x^*)\le\frac{M\left(\|x^0-x^*\|_{D_0/M_0}^2+C\right)}{2t^p}
\end{align}
for any $t\ge1$, where $D_t=\diag(\nabla^2f(x^t))$, $M_t=\max_{1\le i\le n}(\nabla^2f(x^t))_{i,i}$, and $M=\max\big\{1,\frac{\eta\tau}{\beta}\big\}L_1$.
\end{theorem}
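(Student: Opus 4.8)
The plan is to combine the descent inequality of Lemma~\ref{lem:f-p-g-ineq} with a telescoping argument in which the moving metric is first normalised so that a single uniform constant can be factored out. First I would apply Lemma~\ref{lem:f-p-g-ineq} with $x=x^t$, $x^+=x^{t+1}$, and $H=H_t\coloneqq\eta^{k_t}D_t$; the inner-loop acceptance condition \eqref{eq:line-search} is exactly \eqref{eq:p-line-search}, so the lemma applies. Since $f$ is convex we have $\ell_f(y,x^t)\ge0$, and since $\beta\le1$ the term $-\frac{\beta-1}{2}\|x^{t+1}-x^t\|_{H_t}^2$ is nonnegative, whence for every $y$,
\begin{align}
F(y)-F(x^{t+1})\ge\frac{1}{2}\|y-x^{t+1}\|_{H_t}^2-\frac{1}{2}\|y-x^t\|_{H_t}^2.
\end{align}
Taking $y=x^t$ gives $F(x^{t+1})\le F(x^t)$, so $\{F(x^t)\}$ is nonincreasing; taking $y=x^*$ gives
\begin{align}
F(x^{t+1})-F(x^*)\le\frac{1}{2}\|x^*-x^t\|_{H_t}^2-\frac{1}{2}\|x^*-x^{t+1}\|_{H_t}^2.
\end{align}

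Next I would rescale the metric. Because every diagonal entry of $D_t$ is at least $m>0$, we have $M_t=\lambda_{\max}(D_t)>0$ and $D_t/M_t\succ O$, so $H_t=\eta^{k_t}M_t\,(D_t/M_t)$; moreover $\eta^{k_t}M_t=\lambda_{\max}(H_t)\le M$ by Lemma~\ref{lem:well-def-1} together with $(\nabla^2f(x^t))_{i,i}\le L_1$ (see \eqref{eq:boundedness-diag}). Hence the last inequality becomes
\begin{align}
F(x^{t+1})-F(x^*)\le\frac{\eta^{k_t}M_t}{2}\Big(\|x^*-x^t\|_{D_t/M_t}^2-\|x^*-x^{t+1}\|_{D_t/M_t}^2\Big).
\end{align}
Since the left-hand side is nonnegative, the bracketed quantity is nonnegative, so I may replace $\eta^{k_t}M_t$ by the upper bound $M$ and then sum over $s=0,\dots,t-1$. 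The resulting sum $\sum_{s=0}^{t-1}\big(\|x^*-x^s\|_{D_s/M_s}^2-\|x^*-x^{s+1}\|_{D_s/M_s}^2\big)$ only partially telescopes because the metric index changes with $s$; a short reindexing shows it equals
\begin{align}
\|x^0-x^*\|_{D_0/M_0}^2-\|x^t-x^*\|_{D_{t-1}/M_{t-1}}^2+\sum_{s=0}^{t-2}\Big(\|x^{s+1}-x^*\|_{D_{s+1}/M_{s+1}}^2-\|x^{s+1}-x^*\|_{D_s/M_s}^2\Big).
\end{align}
The last sum is precisely the numerator in \eqref{eq:hessian-gap}, hence bounded by $Ct^{1-p}$, and the middle term is nonpositive, so $\frac{2}{M}\sum_{s=0}^{t-1}\big(F(x^{s+1})-F(x^*)\big)\le\|x^0-x^*\|_{D_0/M_0}^2+Ct^{1-p}$.

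Finally I would invoke monotonicity of $\{F(x^s)\}$: since $F(x^{s+1})\ge F(x^t)$ for $s\le t-1$, the left-hand sum is at least $t\big(F(x^t)-F(x^*)\big)$, which gives $F(x^t)-F(x^*)\le\frac{M}{2t}\big(\|x^0-x^*\|_{D_0/M_0}^2+Ct^{1-p}\big)$; bounding $1/t\le1/t^{p}$ (valid since $t\ge1$ and $p\le1$) yields exactly \eqref{eq:sub-linear-convergence-1}. I expect the only genuinely delicate point to be the treatment of the moving metric: the normalisation by $M_t$ is what allows the single constant $M$ to be factored out of every term uniformly, the sign argument is needed before replacing $\eta^{k_t}M_t$ by $M$, and one must recognise that the non-telescoping remainder is exactly the quantity that \eqref{eq:hessian-gap} is designed to control. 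The remaining steps are the standard proximal-gradient sublinear-rate bookkeeping.
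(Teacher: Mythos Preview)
Your proposal is correct and follows essentially the same route as the paper: apply Lemma~\ref{lem:f-p-g-ineq} with $y=x^*$, drop the nonnegative $\ell_f$ and $(1-\beta)$ terms, normalise the metric to $D_s/M_s$, telescope, and invoke \eqref{eq:hessian-gap} together with monotonicity of $\{F(x^t)\}$. The only cosmetic difference is that the paper divides each inequality by $\eta^{k_s}M_s$ \emph{before} summing and then uses $\eta^{k_s}M_s\le M$ to lower-bound the left-hand side, whereas you use the sign observation to replace $\eta^{k_s}M_s$ by $M$ on the right-hand side first; the two are equivalent, as is your final $1/t\le1/t^p$ versus the paper's $t^{1-p}\ge1$.
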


\begin{proof}
Since $f$ and $g$ are convex, by using Lemma \ref{lem:f-p-g-ineq} with $x=x^s,~ y=x^*,~ H=\eta^{k_s}D_s$, we have
\begin{align}
    F(x^*)-F(x^{s+1}) &\ge\frac{1}{2}\|x^{s+1}-x^*\|_{\eta^{k_s}D_s}^2-\frac{1}{2}\|x^s-x^*\|_{\eta^{k_s}D_s}^2+\ell_f(x^*,x^s)-\frac{\beta-1}{2}\|x^{s+1}-x^s\|_{\eta^{k_s}D_s}^2\\
    &\ge\frac{1}{2}\|x^{s+1}-x^*\|_{\eta^{k_s}D_s}^2-\frac{1}{2}\|x^s-x^*\|_{\eta^{k_s}D_s}^2.
\end{align}
Dividing the above by $\eta^{k_s}M_s$ and then summing it over $s=0,\ldots,t-1$, we obtain
\begin{align}
    &\sum_{s=0}^{t-1}\frac{F(x^{s+1})-F(x^*)}{\eta^{k_s}M_s}\\
    &\le\frac{1}{2}\sum_{s=0}^{t-1}\left(\|x^s-x^*\|_{D_s/M_s}^2-\|x^{s+1}-x^*\|_{D_s/M_s}^2\right)\\
    &=\frac{1}{2}\|x^0-x^*\|_{D_0/M_0}^2+\frac{1}{2}\sum_{s=0}^{t-2}\left(\|x^{s+1}-x^*\|_{D_{s+1}/M_{s+1}}^2-\|x^{s+1}-x^*\|_{D_s/M_s}^2\right)-\frac{1}{2}\|x^t-x^*\|_{D_{t-1}/M_{t-1}}^2\\
    &\le\frac{\|x^0-x^*\|_{D_0/M_0}^2+Ct^{1-p}}{2}\\
    &\le\frac{\left(\|x^0-x^*\|_{D_0/M_0}^2+C\right)t^{1-p}}{2}.
\end{align}
We see from \eqref{eq:strong-sufficient-descent-1} that $\{F(x^t)\}$ is non-increasing and from Lemma \ref{lem:well-def-1} that $\eta^{k_s}M_s\le M$, and hence it holds that
\begin{align}
    \frac{t}{M}\left(F(x^t)-F(x^*)\right) &=\sum_{s=0}^{t-1}\frac{F(x^t)-F(x^*)}{M}\\
    &\le\sum_{s=0}^{t-1}\frac{F(x^{s+1})-F(x^*)}{\eta^{k_s}M_s}.
\end{align}
Consequently, we have the desired result \eqref{eq:sub-linear-convergence-1}.
\end{proof}

Theorem \ref{thm:sub-linear-convergence-1} is a slight extension of Theorem 3 of \citet{scheinberg2016practical}.
When $f$ is a convex quadratic function such that diagonal components of $\nabla^2f$ are positive, the assumption \eqref{eq:hessian-gap} is guaranteed, then we obtain $\mathcal{O}(1/t)$ sublinear convergence rate.

\begin{corollary}\label{cor:sub-linear-convergence-2}
Let $0<\beta\le1$, $\{x^t\}_{t=0}^\infty$ be a sequence generated by Algorithm \ref{alg:PDNM} without termination, $f(x)=\frac{1}{2}x^\top Qx+l^\top x$, and $D=\diag(Q)$, where $Q$ is a positive semidefinite matrix.
Assume that $g$ is convex, $m\coloneqq\min_{1\le i\le n}D_{i,i}>0$, and there exists an optimal solution $x^*$ of \eqref{problem:general}.
Then, we have
\begin{align}\label{eq:sub-linear-convergence-2}
    F(x^t)-F(x^*)\le\frac{M\|x^0-x^*\|_{D'}^2}{2t}
\end{align}
for any $t\ge1$, where $M=\max\big\{1,\frac{\eta\tau}{\beta}\big\}L_1$ and $D'=D/\max_{1\le i\le n}D_{i,i}$.
\end{corollary}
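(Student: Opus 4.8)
The plan is to obtain the corollary as the special case $p=1$ of Theorem~\ref{thm:sub-linear-convergence-1}. The only real work is to confirm that the hypotheses of that theorem hold for this quadratic $f$, the delicate one being that the constant $C$ in \eqref{eq:hessian-gap} is finite; I expect it to be exactly $0$. Granting that, substituting $p=1$ and $C=0$ into \eqref{eq:sub-linear-convergence-1} and rewriting $D_0/M_0$ as $D'$ produces \eqref{eq:sub-linear-convergence-2}.

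First I would check the routine assumptions. Since $Q\succeq O$, $f$ is convex, and $g$ is convex by hypothesis. From $\nabla f(x)=Qx+l$ we get $\|\nabla f(x)-\nabla f(y)\|_2=\|Q(x-y)\|_2\le\|Q\|_{2,2}\|x-y\|_2$, so $\nabla f$ is Lipschitz continuous, say with $L_1=\lambda_{\max}(Q)$. Because $\nabla^2 f\equiv Q$, we have $\diag(\nabla^2 f(x^t))=D\succeq mI$ with $m=\min_{1\le i\le n}D_{i,i}>0$, and an optimal solution $x^*$ exists by assumption, while $0<\beta\le1$. Moreover $Q\preceq\lambda_{\max}(Q)I\preceq(\lambda_{\max}(Q)/m)D$, so $\tau\le\lambda_{\max}(Q)/m<\infty$ (cf.\ \eqref{eq:ineq-tau}) and hence $M=\max\{1,\eta\tau/\beta\}L_1$ is a genuine finite constant; in particular Lemma~\ref{lem:well-def-1} applies.

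The key point is that $\nabla^2 f$ is constant, so along the entire sequence $D_t=D$ and $M_t=\max_{1\le i\le n}D_{i,i}$ do not depend on $t$. Therefore every summand $\|x^{s+1}-x^*\|_{D_{s+1}/M_{s+1}}^2-\|x^{s+1}-x^*\|_{D_s/M_s}^2$ in the numerator of \eqref{eq:hessian-gap} vanishes, whence $C=0$ (for every $p\in(0,1]$, and we take $p=1$). Theorem~\ref{thm:sub-linear-convergence-1} with $p=1$ then gives $F(x^t)-F(x^*)\le M(\|x^0-x^*\|_{D_0/M_0}^2+C)/(2t)=M\|x^0-x^*\|_{D_0/M_0}^2/(2t)$ for all $t\ge1$, and since $D_0=D$ and $M_0=\max_{1\le i\le n}D_{i,i}$ we have $D_0/M_0=D'$, which is \eqref{eq:sub-linear-convergence-2}.

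I do not anticipate a real obstacle: the statement is just Theorem~\ref{thm:sub-linear-convergence-1} read off for a quadratic whose metric is frozen across iterations. The two things to keep an eye on are making sure $C$ is well defined and finite, which here collapses to $C=0$ by constancy of the Hessian, and matching constants correctly, i.e.\ recognizing $D_0/M_0=D'$ and noting that the bound $\eta^{k_s}M_s\le M$ used inside the proof of Theorem~\ref{thm:sub-linear-convergence-1} remains valid because $M_s=\max_{1\le i\le n}D_{i,i}\le L_1$ and $\eta^{k_s}\le\max\{1,\eta\tau/\beta\}$ by Lemma~\ref{lem:well-def-1}.
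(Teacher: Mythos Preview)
Your proposal is correct and follows exactly the paper's approach: since $\nabla^2 f\equiv Q$ is constant, $D_t=D$ and $M_t=\max_i D_{i,i}$ for all $t$, so every summand in \eqref{eq:hessian-gap} vanishes, yielding $C=0$ with $p=1$, and Theorem~\ref{thm:sub-linear-convergence-1} gives \eqref{eq:sub-linear-convergence-2} upon identifying $D_0/M_0=D'$. Your additional verification of the standing hypotheses (Lipschitz $\nabla f$, finiteness of $\tau$ and $M$) is more explicit than the paper's terse proof but not a different argument.
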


\begin{proof}
Noting that $\diag(\nabla^2f(x^t))=D$, the assumption \eqref{eq:hessian-gap} holds with $C=0, p=1$.
From Theorem \ref{thm:sub-linear-convergence-1}, we obtain the desired result \eqref{eq:sub-linear-convergence-2}.
\end{proof}

The convergence results so far are not specific to the PDNM, and are also valid for PGMs.
However, under certain assumptions, the PDNM has a quadratic convergence property that does not hold true for PGMs.

\begin{theorem}\label{thm:quadratic-convergence-1}
Let $1<\beta<2$, $\{x^t\}_{t=0}^\infty$ be a sequence generated by Algorithm \ref{alg:PDNM} without termination, and $f(x)=\sum_{i=1}^n f_i(x_i)$, where $f_i:\mathbb{R}\to\mathbb{R}$.
We suppose that $f$ is $m$-strongly convex, $\nabla f$ and $\nabla^2 f$ are Lipschitz continuous with constant $L_1$ and $L_2$, and $g$ is convex.
Then, the sequence $\{x^t\}_{t=0}^\infty$ converges to the unique optimal solution of \eqref{problem:general}.
Furthermore, for sufficiently large $t$, $k_t=0$ holds and the sequence converges Q-quadratically.
\end{theorem}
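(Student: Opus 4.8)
The plan is to bootstrap from the global convergence already available in Theorem~\ref{thm:global-convergence-1} to a local analysis that exploits the separability of $f$, which is precisely what upgrades the diagonal step into a genuine Newton step.

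\textbf{Step 1: global convergence and identification of the limit.} First I would observe that $m$-strong convexity of $f$ together with $L_1$-Lipschitzness of $\nabla f$ gives $mI\preceq\nabla^2f(x)\preceq L_1I$ for all $x$, so in particular $\diag(\nabla^2f(x))\succeq mI$ and the hypotheses of Lemma~\ref{lem:well-def-1} hold. Since $g$ is convex and $1<\beta<2$, Theorem~\ref{thm:global-convergence-1} then yields $\|x^{t+1}-x^t\|_2\to0$ and that every accumulation point is a d-stationary point. Because $F=f+g$ is strongly convex it has a unique minimizer $x^*$, which is its unique d-stationary point, and \eqref{eq:strong-sufficient-descent-1} shows $\{F(x^t)\}$ is non-increasing, so $\{x^t\}$ stays in the bounded sublevel set $\{x:F(x)\le F(x^0)\}$. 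Hence $\{x^t\}$ is bounded and $\dist(x^t,\mathcal{X}^*)\to0$ with $\mathcal{X}^*=\{x^*\}$, i.e.\ $x^t\to x^*$.

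\textbf{Step 2: $k_t=0$ for all large $t$.} Here separability is essential: $\nabla^2f(x^t)$ is diagonal, so $D_t=\diag(\nabla^2f(x^t))=\nabla^2f(x^t)$ and the step with $k=0$ is an exact proximal Newton step. Writing Taylor's theorem as $f(x^{t+1})=f(x^t)+\nabla f(x^t)^\top(x^{t+1}-x^t)+\tfrac12\|x^{t+1}-x^t\|_{\nabla^2f(\xi_t)}^2$ for some $\xi_t$ on the segment from $x^t$ to $x^{t+1}$, I would use $\nabla^2f(\xi_t)\preceq\nabla^2f(x^t)+L_2\|x^{t+1}-x^t\|_2 I$ and $\nabla^2f(x^t)\succeq mI$ to get $\|x^{t+1}-x^t\|_{\nabla^2f(\xi_t)}^2\le(1+\tfrac{L_2}{m}\|x^{t+1}-x^t\|_2)\|x^{t+1}-x^t\|_{D_t}^2$. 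Since $\|x^{t+1}-x^t\|_2\to0$ and $\beta>1$, eventually $1+\tfrac{L_2}{m}\|x^{t+1}-x^t\|_2\le\beta$, so the acceptance criterion \eqref{eq:line-search} holds at $k=0$ and thus $k_t=0$. For $t$ large enough that $k_t=0$, optimality of $x^{t+1}$ in the subproblem gives $-\nabla f(x^t)-\nabla^2f(x^t)(x^{t+1}-x^t)\in\partial g(x^{t+1})$, while optimality of $x^*$ gives $-\nabla f(x^*)\in\partial g(x^*)$; monotonicity of $\partial g$ then produces $\|x^{t+1}-x^*\|_{\nabla^2f(x^t)}^2\le\langle\nabla f(x^*)-\nabla f(x^t)-\nabla^2f(x^t)(x^*-x^t),\,x^{t+1}-x^*\rangle$. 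Bounding the first factor by $\tfrac{L_2}{2}\|x^t-x^*\|_2^2$ via the integral form of Taylor's theorem and $L_2$-Lipschitzness of $\nabla^2f$, then using $\nabla^2f(x^t)\succeq mI$, gives $m\|x^{t+1}-x^*\|_2^2\le\tfrac{L_2}{2}\|x^t-x^*\|_2^2\,\|x^{t+1}-x^*\|_2$, hence $\|x^{t+1}-x^*\|_2\le\tfrac{L_2}{2m}\|x^t-x^*\|_2^2$ for all large $t$, which is Q-quadratic convergence.

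\textbf{Expected main obstacle.} No single estimate is hard; the delicate part is the orchestration. One must have $x^t\to x^*$ and $\|x^{t+1}-x^t\|_2\to0$ in hand before the quadratic bound can be switched on, the quadratic bound only applies on the tail where $k_t=0$, and separability must be used in an essential way to equate the metric $D_t$ with the true Hessian $\nabla^2f(x^t)$ — without it the method is only a diagonal-scaled step with at best linear convergence. I would also be careful that the subproblem solution is a well-defined singleton (strong convexity of $g+\tfrac12\|\cdot\|_{D_t}^2$), so that the subdifferential inclusions used in the quadratic estimate are unambiguous.
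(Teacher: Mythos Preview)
Your overall strategy is correct and matches the paper's: global convergence via Theorem~\ref{thm:global-convergence-1}, then exploit separability ($D_t=\nabla^2f(x^t)$) so that the $k=0$ step is a genuine proximal Newton step, and finally a standard quadratic estimate. Your derivation of the quadratic bound via monotonicity of $\partial g$ is equivalent to the paper's route through non-expansiveness of $\prox_g^{D_t}$ in $\|\cdot\|_{D_t}$ (the paper invokes \citet[Theorem~3.4]{lee2014proximal}); both produce the same constant $\frac{L_2}{2m}$.

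There is, however, a real ordering gap in Step~2. You justify acceptance of the $k=0$ step by asserting that eventually $1+\frac{L_2}{m}\|x^{t+1}-x^t\|_2\le\beta$, citing $\|x^{t+1}-x^t\|_2\to0$ from Step~1. But the convergence in Step~1 is for the \emph{accepted} iterates (computed with metric $\eta^{k_t}D_t$), whereas the quantity in your Taylor bound is $\|y^t-x^t\|_2$ with $y^t\coloneqq\prox_g^{D_t}(x^t-D_t^{-1}\nabla f(x^t))$ the $k=0$ candidate. These coincide only \emph{after} $k_t=0$ has been established, so the argument as written is circular.

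The fix is simply to reverse the order, which is precisely what the paper does. Your $\partial g$-monotonicity computation applies verbatim to $y^t$ (the inclusion $-\nabla f(x^t)-D_t(y^t-x^t)\in\partial g(y^t)$ holds regardless of whether $y^t$ is accepted), yielding $\|y^t-x^*\|_2\le\frac{L_2}{2m}\|x^t-x^*\|_2^2$. Since $x^t\to x^*$ from Step~1, this gives $\|y^t-x^t\|_2\le\|y^t-x^*\|_2+\|x^t-x^*\|_2\to0$; \emph{now} your Taylor bound shows the line-search accepts $k=0$ for all large $t$, so $x^{t+1}=y^t$ and the quadratic rate follows.
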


\begin{proof}
It follows from the strong convexity of $f$ that $\diag(\nabla^2f(x^t))=\nabla^2f(x^t)\succeq mI$.
Noting that the inequality \eqref{eq:strong-sufficient-descent-1} holds and hence $F(x^t)\le F(x^0)<\infty$, we see from strong convexity of $F$ that the sequence $\{x^t\}_{t=0}^\infty$ is bounded.
From Theorem \ref{thm:global-convergence-1}, the sequence $\{x^t\}_{t=0}^\infty$ converges to the unique optimal solution of \eqref{problem:general}.

Let
\begin{align}
    y^t=\prox_g^{D_t}\left(x^t-D_t^{-1}\nabla f(x^t)\right),
\end{align}
where $D_t=\nabla^2f(x^t)$.
Since $\nabla^2f$ is Lipschitz continuous, it holds that
\begin{align}\label{eq:cubic-ineq}
\begin{split}
    f(y^t) &\le f(x^t)+\nabla f(x^t)^\top(y^t-x^t)+\frac{1}{2}\|y^t-x^t\|_{D_t}^2+\frac{L_2}{6}\|y^t-x^t\|_2^3\\
    &\le f(x^t)+\nabla f(x^t)^\top(y^t-x^t)+\frac{1}{2}\|y^t-x^t\|_{D_t}^2\left(1+\frac{L_2}{3m}\|y^t-x^t\|_2\right).
\end{split}
\end{align}
One can obtain
\begin{align}\label{eq:quadratic}
\begin{split}
    \|y^t-x^*\|_2 &\le\frac{1}{\sqrt{m}}\|y^t-x^*\|_{D_t}\\
    &=\frac{1}{\sqrt{m}}\left\|\prox_g^{D_t}\left(x^t-D_t^{-1}\nabla f(x^t)\right)-\prox_g^{D_t}\left(x^*-D_t^{-1}\nabla f(x^*)\right)\right\|_{D_t}\\
    &\le\frac{1}{\sqrt{m}}\left\|x^t-x^*-D_t^{-1}(\nabla f(x^t)-\nabla f(x^*))\right\|_{D_t}\\
    &=\frac{1}{\sqrt{m}}\left\|D_t(x^t-x^*)-(\nabla f(x^t)-\nabla f(x^*))\right\|_{D_t^{-1}}\\
    &\le\frac{1}{m}\left\|D_t(x^t-x^*)-(\nabla f(x^t)-\nabla f(x^*))\right\|_2\\
    &\le\frac{L_2}{2m}\left\|x^t-x^*\right\|_2^2,
\end{split}
\end{align}
as well as the proof of Theorem 3.4 of \citet{lee2014proximal}, where $x^*$ is the unique optimal solution of \eqref{problem:general}.
Since we see from the above inequality that
\begin{align}
    \|y^t-x^t\|_2 &\le\|x^t-x^*\|_2+\|y^t-x^*\|_2\\
    &\le\|x^t-x^*\|_2+\frac{L_2}{2m}\left\|x^t-x^*\right\|_2^2,
\end{align}
it holds that $\|y^t-x^t\|_2\to0$ and hence the inequality $1+\frac{L_2}{3m}\|y^t-x^t\|_2\le\beta$ holds for sufficient large $t$.
Therefore, for sufficient large $t$, it holds that $x^{t+1}=y^t$, namely, $k_t=0$.
This and \eqref{eq:quadratic} imply quadratic convergence of $\{x^t\}_{t=0}^\infty$.
\end{proof}

The diagonal BFGS proposed by \citet{li2022diagonal} for smooth problems (i.e., $g=0$) has a local superlinear convergence property under a similar complete separability assumption.
The diagonal Newton metric is theoretically superior to the diagonal BFGS because it has the global quadratic convergence property.
While Theorem \ref{thm:quadratic-convergence-1} suggests an advantage of the PDNM, the assumption of complete separability, which implies that $\nabla^2f(x)$ is diagonal, is quite strong in practical settings.
Intuitively, there seems to be an advantage to the PDNM even when $\nabla^2f(x)$ is close to a diagonal matrix.
In fact, a result supporting this intuition can be obtained by analyzing an exponent of the following R-linear convergence rate.

\begin{theorem}\label{thm:r-linear-convergence}
Let $0<\beta\le1$, $\{x^t\}_{t=0}^\infty$ be a sequence generated by Algorithm \ref{alg:PDNM} without termination.
We suppose that $f$ is $m$-strongly convex, $\nabla f$ is Lipschitz continuous with constant $L_1$, and $g$ is convex.
Then, it holds that
\begin{align}
    \frac{m}{2}\|x^t-x^*\|_2^2\le F(x^t)-F(x^*)\le\left(1-\frac{\sigma}{\overline{\eta}\tau}\right)^t(F(x^0)-F(x^*))
\end{align}
for all $t\ge0$, where $\overline{\eta}=\max\big\{\frac{1}{\tau},\frac{\eta}{\beta}\big\}\ge1$.
\end{theorem}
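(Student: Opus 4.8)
The plan is to prove the two inequalities separately. The left one is immediate: since $f$ is $m$-strongly convex and $g$ is convex, $F$ is $m$-strongly convex; moreover $f$ is coercive, so $F$ attains its minimum at the necessarily unique point $x^*$. Writing the strong-convexity inequality for $F$ at $\lambda x^t+(1-\lambda)x^*$, using $F(\lambda x^t+(1-\lambda)x^*)\ge F(x^*)$, dividing by $\lambda>0$ and letting $\lambda\searrow0$ gives $F(x^t)-F(x^*)\ge\frac{m}{2}\|x^t-x^*\|_2^2$.

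For the R-linear rate my strategy is to establish the one-step contraction
\begin{align}
F(x^{t+1})-F(x^*)\le\left(1-\frac{\sigma}{\eta^{k_t}}\right)(F(x^t)-F(x^*))
\end{align}
and then finish as follows. Because $f$ is $m$-strongly convex we have $\diag(\nabla^2f(x))\succeq mI\succ O$, so $\sigma,\tau$ are well-defined with $0<\sigma\le\tau<\infty$ (see \eqref{eq:ineq-sigma}, \eqref{eq:ineq-tau}); also $\sigma\le1$ by taking $v=e_i$ and $\lambda=0$ in the infimum defining $\sigma$; and Lemma \ref{lem:well-def-1} gives $\eta^{k_t}\le\max\{1,\eta\tau/\beta\}=\overline{\eta}\tau$. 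Since $1\le\eta^{k_t}\le\overline{\eta}\tau$ and $\sigma\le1$, the per-step factor satisfies $0\le1-\sigma/\eta^{k_t}\le1-\sigma/(\overline{\eta}\tau)<1$, and iterating the contraction from $t=0$ yields the asserted bound.

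The core of the contraction is a curvature estimate along the segment joining $x^t$ and $x^*$, measured in the norm induced by $D_t\coloneqq\diag(\nabla^2f(x^t))$. Setting $\phi(\theta)\coloneqq f(x^t+\theta(x^*-x^t))$, the definition of $\sigma$ gives $\phi''(\theta)=(x^*-x^t)^\top\nabla^2f(x^t+\theta(x^*-x^t))(x^*-x^t)\ge\sigma\|x^*-x^t\|_{D_t}^2$ for all $\theta\in[0,1]$, so $\phi$ is strongly convex on $[0,1]$ with modulus $\mu\coloneqq\sigma\|x^*-x^t\|_{D_t}^2$. From this I would extract, for $\theta\in[0,1]$: (i) combining the strong convexity of $\phi$ with the convexity of $g$,
\begin{align}
F(x^t+\theta(x^*-x^t))\le\theta F(x^*)+(1-\theta)F(x^t)-\frac{\mu}{2}\theta(1-\theta);
\end{align}
and (ii), by Taylor's theorem, $\ell_f(x^t+\theta(x^*-x^t),x^t)\ge\frac{\mu}{2}\theta^2$, where $\ell_f$ is as in Lemma \ref{lem:f-p-g-ineq}.

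Finally I would apply Lemma \ref{lem:f-p-g-ineq} with $x=x^t$, $y=x^t+\theta(x^*-x^t)$, $H=\eta^{k_t}D_t\succ O$, and $x^+=x^{t+1}$; its hypothesis \eqref{eq:p-line-search} is exactly the accepted line-search condition \eqref{eq:line-search}. Using $\beta\le1$ to drop the nonnegative term $-\tfrac{\beta-1}{2}\|x^{t+1}-x^t\|_H^2$, dropping $\tfrac12\|y-x^{t+1}\|_H^2\ge0$, and inserting $\|y-x^t\|_H^2=\eta^{k_t}\theta^2\|x^*-x^t\|_{D_t}^2$ together with (ii), I obtain $F(y)-F(x^{t+1})\ge\tfrac12\theta^2(\sigma-\eta^{k_t})\|x^*-x^t\|_{D_t}^2$. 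Combining this with (i) and rearranging gives
\begin{align}
F(x^{t+1})-F(x^*)\le(1-\theta)(F(x^t)-F(x^*))+\frac{\theta}{2}(\theta\eta^{k_t}-\sigma)\|x^*-x^t\|_{D_t}^2,
\end{align}
and the choice $\theta=\sigma/\eta^{k_t}\in(0,1]$ makes the last bracket vanish, which is the contraction. The step I expect to be the real obstacle — and the reason the exponent is $\sigma/(\overline{\eta}\tau)$ with no spurious $m/L_1$ loss — is this curvature bookkeeping: the chord deficit from strong convexity of $F$, the quadratic penalty carried by the metric $H=\eta^{k_t}D_t$ in Lemma \ref{lem:f-p-g-ineq}, and the remainder term $\ell_f$ must all be expressed in the same $\|\cdot\|_{D_t}$-norm (not in $\|\cdot\|_2$), so that they share the common factor $\|x^*-x^t\|_{D_t}^2$ and the interpolation parameter $\theta=\sigma/\eta^{k_t}$ cancels them exactly. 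Once that is arranged, everything else — the bound $\eta^{k_t}\le\overline{\eta}\tau$, the inequality $\sigma\le1$, and the left inequality — is routine.
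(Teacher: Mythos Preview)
Your proof is correct and follows essentially the same approach as the paper: both apply Lemma~\ref{lem:f-p-g-ineq} at an interpolated point $y=(1-\theta)x^t+\theta x^*$, use the definition of $\sigma$ to express both the strong-convexity deficit of $f$ and the remainder $\ell_f$ in the $\|\cdot\|_{D_t}$-norm, and then choose the interpolation parameter so that the residual cancels. The only cosmetic difference is that the paper fixes $\lambda=\sigma/(\overline{\eta}\tau)$ from the outset and invokes $\eta^{k_t}\le\overline{\eta}\tau$ inside the estimate, whereas you first obtain the sharper per-step factor $1-\sigma/\eta^{k_t}$ via $\theta=\sigma/\eta^{k_t}$ and then bound it; the argument and the final rate are the same.
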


\begin{proof}
Let $D_t\coloneqq\diag(\nabla^2f(x^t))$, $\lambda\coloneqq\frac{\sigma}{\overline{\eta}\tau}$, and $x_\lambda\coloneqq\lambda x^*+(1-\lambda)x^t$.
Note that $0<\frac{\sigma}{\overline{\eta}\tau}<1$ and it holds that
\begin{align}\label{eq:diagonal-strong-descent-lemma}
    \ell_f(x_\lambda,x^t)=\frac{1}{2}\|x_\lambda-x^t\|_{\nabla^2f(x^t+\lambda'(x_\lambda-x^t))}^2\ge\frac{\sigma}{2}\|x_\lambda-x^t\|_{D_t}^2,
\end{align}
where $\lambda'\in(0,1)$, the equality follows from Taylor's theorem, and the inequality from the definition of $\sigma$.
We obtain from \eqref{eq:diagonal-strong-descent-lemma} and Lemma \ref{lem:f-p-g-ineq} with $x=x^t,~ y=x_\lambda\coloneqq\lambda x^*+(1-\lambda)x^t,~ H=\eta^{k_t}D_t$ that
\begin{align}\label{eq:f-p-g-i-strong-0}
\begin{split}
    F(x_\lambda)-F(x^{t+1}) &\ge\frac{1}{2}\|x_\lambda-x^{t+1}\|_{\eta^{k_t}D_t}^2-\frac{1}{2}\|x_\lambda-x^t\|_{\eta^{k_t}D_t}^2+\ell_f(x_\lambda,x^t)-\frac{\beta-1}{2}\|x^{t+1}-x^t\|_{\eta^{k_t}D_t}^2\\
    &\ge-\frac{1}{2}\|x_\lambda-x^t\|_{\eta^{k_t}D_t}^2+\frac{\sigma}{2}\|x_\lambda-x^t\|_{D_t}^2\\
    &=-\frac{\lambda^2\eta^{k_t}}{2}\|x^*-x^t\|_{D_t}^2+\frac{\lambda^2\sigma}{2}\|x^*-x^t\|_{D_t}^2.
\end{split}
\end{align}
On the other hand, we have
\begin{align}
    f(x_\lambda) &=\lambda\left\{f(x^*)-\nabla f(x_\lambda)^\top(x^*-x_\lambda)-\frac{1}{2}\|x^*-x_\lambda\|_{\nabla^2f(x_\lambda+\lambda_1(x^*-x_\lambda))}^2\right\}\\
    &\quad+(1-\lambda)\left\{f(x^t)-\nabla f(x_\lambda)^\top(x^t-x_\lambda)-\frac{1}{2}\|x^t-x_\lambda\|_{\nabla^2f(x_\lambda+\lambda_2(x^t-x_\lambda))}^2\right\}\\
    &=\lambda f(x^*)-\frac{\lambda(1-\lambda)^2}{2}\|x^t-x^*\|_{\nabla^2f(x^t+\{\lambda+\lambda_1(1-\lambda)\}(x^*-x^t))}^2\\
    &\quad+(1-\lambda)f(x^t)-\frac{\lambda^2(1-\lambda)}{2}\|x^t-x^*\|_{\nabla^2f(x^t+\lambda(1-\lambda_2)(x^*-x^t))}^2\\
    &\le\lambda f(x^*)+(1-\lambda)f(x^t)-\frac{\lambda(1-\lambda)\sigma}{2}\|x^t-x^*\|_{D_t}^2,
\end{align}
where $\lambda_1,\lambda_2\in(0,1)$, the first equality follows from Taylor's theorem, and the inequality from the definition of $\sigma$.
Combining this with the convexity of $g$, the inequality \eqref{eq:f-p-g-i-strong-0}, and Lemma \ref{lem:well-def-1} yields 
\begin{align}
    F(x^{t+1}) &\le F(\lambda x^*+(1-\lambda)x^t)+\frac{\lambda^2\eta^{k_t}}{2}\|x^*-x^t\|_{D_t}^2-\frac{\lambda^2\sigma}{2}\|x^*-x^t\|_{D_t}^2\\
    &\le\lambda F(x^*)+(1-\lambda)F(x^t)-\frac{\lambda(1-\lambda)\sigma}{2}\|x^t-x^*\|_{D_t}^2+\frac{\lambda^2\overline{\eta}\tau}{2}\|x^*-x^t\|_{D_t}^2-\frac{\lambda^2\sigma}{2}\|x^*-x^t\|_{D_t}^2\\
    &=\lambda F(x^*)+(1-\lambda)F(x^t),
\end{align}
which implies that
\begin{align}
    F(x^{t+1})-F(x^*)\le(1-\lambda)(F(x^t)-F(x^*)).
\end{align}
As a result, since $F$ is $m$-strongly convex, it follow from \citet[Theorem 5.25]{beck2017first} that
\begin{align}
    \frac{m}{2}\|x^t-x^*\|_2^2\le F(x^t)-F(x^*)\le(1-\lambda)^t(F(x^0)-F(x^*)),
\end{align}
which is the desired result.
\end{proof}

To obtain the implication of Theorem \ref{thm:r-linear-convergence} from the exponent, we consider the strongly convex quadratic function of the form $f(x)=\frac{1}{2}x^\top Qx+l^\top x$.
Then $\sigma$ and $\tau$ are represented as
\begin{align}
    \sigma=\inf_{v\neq0}\frac{v^\top Qv}{v^\top Dv},\quad\tau=\sup_{v\neq0}\frac{v^\top Qv}{v^\top Dv},
\end{align}
where $D\coloneqq\diag(Q)$, that is, $\sigma$ and $\tau$ are the minimum and maximum generalized eigenvalues of $Q$ and $D$, respectively.
With this in mind, let us analyze the exponent of the above R-linear convergence rate.
Since $\overline{\eta}$ is a quantity derived from backtracking, we ignore it and consider only $\frac{\sigma}{\tau}$, which is greater than 0 and less than or equal to 1.
It is obvious that $\frac{\sigma}{\tau}=1$ if and only if $Q=\sigma D(=\tau D)$.
This is further equivalent to $Q=D$ because the diagonal components of $D$ and $Q$ coincide.
In other words, we can say that $\frac{\sigma}{\tau}$ is a quantity that becomes larger as $Q$ is close to a diagonal matrix, implying that convergence is faster then.

\begin{figure}[t]
\centering
\includegraphics[width=40em]{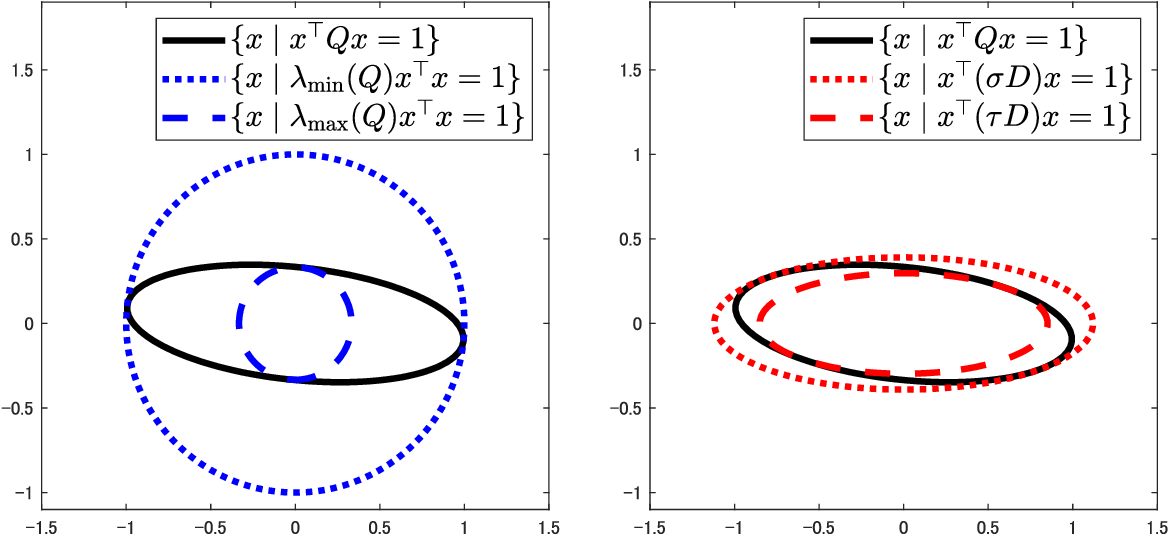}
\caption{The inscribed or circumscribed circles and inscribed or circumscribed ellipses of an ellipsoid}
\label{fig:circle-ellipsoid}
\end{figure}

Next, we attempt to understand a geometric meaning of $\frac{\sigma}{\tau}$.
The black solid line in Figure \ref{fig:circle-ellipsoid} is an ellipsoid $\{x\mid x^\top Qx=1\}$ related to a two-dimensional positive definite matrix $Q$.
From the definition of $\sigma$ and $\tau$, $\{x\mid x^\top(\sigma D)x=1\}$ and $\{x\mid x^\top(\tau D)x=1\}$ are the circumscribed and inscribed ellipsoids of $\{x\mid x^\top Qx=1\}$ (the right-hand side of Figure \ref{fig:circle-ellipsoid}), respectively.
We see from this observation that $\frac{\sigma}{\tau}$ is the square of the ratio of the distance from the center of the inscribed ellipsoid to the circumscribed ellipsoid.

Here we compare it with an exponent of a linear convergence rate of the PGMs.
The reciprocal of the condition number of $Q$ appearing in place of $\frac{\sigma}{\tau}$ (see, e.g., \citep[Theorem 10.29]{beck2017first}) is the square of the ratio of the radius of the inscribed circle to the circumscribed circle of $\{x\mid x^\top Qx=1\}$ (the left-hand side of Figure \ref{fig:circle-ellipsoid}).
Thus, it is implied that the PDNM is superior to the PGMs when as in Figure \ref{fig:circle-ellipsoid}, $Q$ is nearly diagonal and far from scalar multiples of the identity matrix.
While this implication is quite consistent with intuition, to the best of our knowledge, this is the first theoretical result that suggests an advantage of the diagonal metric even when the Hessian of $f$ is not diagonal.

Note that Q-linear convergence is also obtained in the case where $f$ is a strongly convex quadratic function.

\begin{theorem}\label{thm:q-linear-convergence}
Let $0<\beta\le1$, $\{x^t\}_{t=0}^\infty$ be a sequence generated by Algorithm \ref{alg:PDNM} without termination, $f(x)=\frac{1}{2}x^\top Qx+l^\top x$, and $D=\diag(Q)$, where $Q$ is a positive definite matrix.
Assume that $g$ is convex, and let $x^*$ be the unique optimal solution of \eqref{problem:general}.
Then, we have
\begin{align}
    \|x^{t+1}-x^*\|_D^2 &\le\left(1-\frac{\sigma}{\overline{\eta}\tau}\right)\|x^t-x^*\|_D^2,\\
    \|x^t-x^*\|_D^2 &\le\left(1-\frac{\sigma}{\overline{\eta}\tau}\right)^t\|x^0-x^*\|_D^2, \label{eq:linear-sequence}\\
    F(x^{t+1})-F(x^*) &\le\frac{\overline{\eta}\tau}{2}\left(1-\frac{\sigma}{\overline{\eta}\tau}\right)^{t+1}\|x^0-x^*\|_D^2. \label{eq:linear-objective}
\end{align}
for any $t\ge0$, where $\overline{\eta}=\max\big\{\frac{1}{\tau},\frac{\eta}{\beta}\big\}$.
\end{theorem}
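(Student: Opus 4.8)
The plan is to specialize the two ingredients already developed for the general analysis---Lemma \ref{lem:f-p-g-ineq} and the step-size bound of Lemma \ref{lem:well-def-1}---to the quadratic case, much as in the proof of Theorem \ref{thm:r-linear-convergence}, but now measuring progress in the \emph{fixed} norm $\|\cdot\|_D$. First I would record the simplifications coming from $f$ being quadratic: $\nabla^2 f(x)\equiv Q$, so the metric used at iteration $t$ is $H_t=\eta^{k_t}D$ with $D=\diag(Q)$ constant, and $H_t\succ O$ because $Q\succ O$ forces every $D_{i,i}=e_i^\top Q e_i>0$. As noted just before the statement, $\sigma$ and $\tau$ are the extreme generalized eigenvalues of $(Q,D)$, so $\sigma\|v\|_D^2\le\|v\|_Q^2\le\tau\|v\|_D^2$ for all $v$ (and, taking traces of $D^{-1}Q$, one sees $0<\sigma\le1\le\tau$). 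Applying Lemma \ref{lem:well-def-1} with $m=\min_i D_{i,i}>0$ and a Lipschitz constant $L_1$ of $\nabla f$ then guarantees that each $k_t$ is well defined with $1\le\eta^{k_t}\le\max\{1,\eta\tau/\beta\}=\overline{\eta}\tau$; since $\overline{\eta}\ge\eta/\beta>1$ this also shows $0<1-\tfrac{\sigma}{\overline{\eta}\tau}<1$, so the claimed geometric factor is admissible.

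For the one-step estimate I would apply Lemma \ref{lem:f-p-g-ineq} with $x=x^t$, $y=x^*$, $H=H_t$, whose hypothesis \eqref{eq:p-line-search} is precisely the line-search condition \eqref{eq:line-search} accepted by the algorithm. This yields
\[
F(x^*)-F(x^{t+1})\ge\tfrac12\|x^{t+1}-x^*\|_{H_t}^2-\tfrac12\|x^t-x^*\|_{H_t}^2+\ell_f(x^*,x^t)-\tfrac{\beta-1}{2}\|x^{t+1}-x^t\|_{H_t}^2 .
\]
The left side is $\le0$ by optimality of $x^*$, and $-\tfrac{\beta-1}{2}\|x^{t+1}-x^t\|_{H_t}^2\ge0$ because $\beta\le1$; dropping the latter term, using $\ell_f(x^*,x^t)=\tfrac12\|x^*-x^t\|_Q^2\ge\tfrac{\sigma}{2}\|x^*-x^t\|_D^2$ (the exact second-order expansion of the quadratic $f$, then the definition of $\sigma$), and $\|\cdot\|_{H_t}^2=\eta^{k_t}\|\cdot\|_D^2$, and finally dividing by $\eta^{k_t}$, gives $\|x^{t+1}-x^*\|_D^2\le\bigl(1-\tfrac{\sigma}{\eta^{k_t}}\bigr)\|x^t-x^*\|_D^2\le\bigl(1-\tfrac{\sigma}{\overline{\eta}\tau}\bigr)\|x^t-x^*\|_D^2$; the factor is nonnegative since $\sigma\le1\le\eta^{k_t}$, and it is monotone in $\eta^{k_t}\le\overline{\eta}\tau$. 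Iterating this bound over $0,\dots,t-1$ yields \eqref{eq:linear-sequence}.

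For the objective-value bound \eqref{eq:linear-objective} I would reuse the same consequence of Lemma \ref{lem:f-p-g-ineq}, now rearranged as an upper bound on $F(x^{t+1})-F(x^*)$: discarding the nonpositive terms $-\tfrac12\|x^{t+1}-x^*\|_{H_t}^2$ and $\tfrac{\beta-1}{2}\|x^{t+1}-x^t\|_{H_t}^2$ and again using $\ell_f(x^*,x^t)\ge\tfrac{\sigma}{2}\|x^*-x^t\|_D^2$ leaves $F(x^{t+1})-F(x^*)\le\tfrac{\eta^{k_t}-\sigma}{2}\|x^t-x^*\|_D^2\le\tfrac{\overline{\eta}\tau}{2}\bigl(1-\tfrac{\sigma}{\overline{\eta}\tau}\bigr)\|x^t-x^*\|_D^2$, and then \eqref{eq:linear-sequence} supplies the remaining factor $\bigl(1-\tfrac{\sigma}{\overline{\eta}\tau}\bigr)^t$. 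I do not expect a genuine obstacle here; the only subtle point is purely arithmetic, namely that one must keep the term $-\ell_f(x^*,x^t)$ (instead of bounding it crudely by $0$) when estimating $F(x^{t+1})-F(x^*)$, since it is exactly this term that supplies the extra factor $1-\tfrac{\sigma}{\overline{\eta}\tau}$ needed to reach exponent $t+1$ in \eqref{eq:linear-objective} rather than the weaker exponent $t$.
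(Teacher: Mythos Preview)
Your proposal is correct and follows essentially the same route as the paper's proof: apply Lemma~\ref{lem:f-p-g-ineq} with $x=x^t$, $y=x^*$, $H=\eta^{k_t}D$, use $\ell_f(x^*,x^t)=\tfrac12\|x^*-x^t\|_Q^2\ge\tfrac{\sigma}{2}\|x^*-x^t\|_D^2$ together with $\beta\le1$ and $F(x^*)\le F(x^{t+1})$ to obtain the one-step contraction in $\|\cdot\|_D$, bound $\eta^{k_t}\le\overline{\eta}\tau$ via Lemma~\ref{lem:well-def-1}, iterate for \eqref{eq:linear-sequence}, and then rearrange the same inequality (keeping the $-\ell_f$ contribution) to get \eqref{eq:linear-objective}. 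Your extra remark that $\sigma\le1\le\tau$ (via the trace of $D^{-1}Q$) is a nice touch the paper does not make explicit, but otherwise the arguments coincide.
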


\begin{proof}
Since it holds that
\begin{align}
    \ell_f(x^*,x^t)\geq\frac{\sigma}{2}\|x^*-x^t\|_{D}^2,
\end{align}
for the same reason as \eqref{eq:diagonal-strong-descent-lemma}, we obtain from this and Lemma \ref{lem:f-p-g-ineq} with $x=x^t,~ y=x^*,~ H=\eta^{k_t}D$ that
\begin{align}\label{eq:f-p-g-i-strong}
    F(x^*)-F(x^{t+1}) &\ge\frac{1}{2}\|x^*-x^{t+1}\|_{\eta^{k_t}D}^2-\frac{1}{2}\|x^*-x^t\|_{\eta^{k_t}D}^2+\ell_f(x^*,x^t)-\frac{\beta-1}{2}\|x^{t+1}-x^t\|_{\eta^{k_t}D}^2\\
    &\ge\frac{1}{2}\|x^*-x^{t+1}\|_{\eta^{k_t}D}^2-\frac{1}{2}\|x^*-x^t\|_{\eta^{k_t}D}^2+\frac{\sigma}{2}\|x^*-x^t\|_{D}^2.
\end{align}
Since $F(x^*)-F(x^{t+1})\le0$, we have
\begin{align}
    \|x^{t+1}-x^*\|_{D}^2\le\left(1-\frac{\sigma}{\eta^{k_t}}\right)\|x^t-x^*\|_D^2\le\left(1-\frac{\sigma}{\overline{\eta}\tau}\right)\|x^t-x^*\|_D^2,
\end{align}
where the second inequality follows from Lemma \ref{lem:well-def-1}.
Repeated use of this inequality yields \eqref{eq:linear-sequence}.
We also obtain from \eqref{eq:f-p-g-i-strong} that
\begin{align}
    F(x^{t+1})-F(x^*) &\le\frac{\eta^{k_t}-\sigma}{2}\|x^t-x^*\|_{D}^2-\frac{1}{2}\|x^{t+1}-x^*\|_{\eta^{k_t}D}^2\\
    &\le\frac{\overline{\eta}\tau-\sigma}{2}\|x^t-x^*\|_{D}^2\\
    &\le\frac{\overline{\eta}\tau-\sigma}{2}\left(1-\frac{\sigma}{\overline{\eta}\tau}\right)^t\|x^0-x^*\|_D^2\\
    &=\frac{\overline{\eta}\tau}{2}\left(1-\frac{\sigma}{\overline{\eta}\tau}\right)^{t+1}\|x^0-x^*\|_D^2.
\end{align}
This completes the proof.
\end{proof}

\subsection{PDNM with Nonmonotone Line Search }
To improve the practical performance of the PDNM, we consider incorporating a nonmonotone line search technique \citep{grippo1986nonmonotone,grippo2002nonmonotone}.
Concretely, the acceptance criterion \eqref{eq:line-search} of the PDNM is replaced by
\begin{align}\label{eq:nonmonotone-line-search}
    F(x^{t+1})\le\max_{(t-M+1)_+\le s\le t} F(x^s)-\frac{\alpha}{2}\|x^{t+1}-x^t\|_{\eta^kD_t}^2,
\end{align}
where $\eta>1,~ 0<\alpha<1,~ M\ge1$, and $D_t=\diag(\nabla^2f(x^t))$.
We refer to it as the nonmonotone proximal diagonal Newton method (NPDNM), which is shown in Algorithm \ref{alg:NPDNM}.

\begin{algorithm}[H]
\caption{Nonmonotone proximal diagonal Newton method for the problem \eqref{problem:general}}
    \label{alg:NPDNM}
    \begin{algorithmic}
    \STATE {\bfseries Input:} $x^0\in\dom g,~ \eta>1,~ 0<\alpha<1,~ M\ge1$, and $t=0$.
    \REPEAT
    \STATE Calculate $H_t=\diag(\nabla^2f(x^t))$.
    \STATE Compute $x^{t+1}\in\prox_g^{H_t}\left(x^t-H_t^{-1}\nabla f(x^t)\right)$.
    \WHILE{$F(x^{t+1})>\max_{(t-M+1)_+\le s\le t} F(x^s)-\frac{\alpha}{2}\|x^{t+1}-x^t\|_{H_t}^2$}
    \STATE Set $H_t\leftarrow\eta H_t$.
    \STATE Compute $x^{t+1}\in\prox_g^{H_t}\left(x^t-H_t^{-1}\nabla f(x^t)\right)$.
    \ENDWHILE
    \STATE Set $t\leftarrow t+1$.
    \UNTIL Terminated criterion is satisfied.
    \end{algorithmic}
\end{algorithm}

Each iteration of Algorithm \ref{alg:NPDNM} is well-defined under the same assumptions as in Algorithm \ref{alg:PDNM}.

\begin{lemma}\label{lem:well-def-2}
Let $k_t$ be the smallest nonnegative integer $k$ that satisfies \eqref{eq:nonmonotone-line-search}.
Suppose that $\nabla f$ is Lipschitz continuous with constant $L_1$ and there exists $m>0$ such that $\diag(\nabla^2f(x^t))\succeq mI$.
Then, $k_t$ is well-defined and it holds that $\eta^{k_t}\le\max\big\{1,\frac{\eta\tau}{(1-\alpha)}\big\}$ for any $t$.
\end{lemma}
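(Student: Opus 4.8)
The plan is to follow the proof of Lemma~\ref{lem:well-def-1} almost verbatim, the only new wrinkle being that the acceptance test \eqref{eq:nonmonotone-line-search} is stated for $F$ and a running maximum rather than for $f$; I would handle this by first reducing to a one-step descent against $F(x^t)$. Concretely, since $s=t$ belongs to the index range $(t-M+1)_+\le s\le t$, we have $F(x^t)\le\max_{(t-M+1)_+\le s\le t}F(x^s)$, so it suffices to exhibit, for all sufficiently large $k$, the inequality $F(x^{t+1})\le F(x^t)-\frac{\alpha}{2}\|x^{t+1}-x^t\|_{\eta^kD_t}^2$ with $D_t=\diag(\nabla^2f(x^t))$.

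To produce that inequality I would combine two estimates. By Taylor's theorem there is $\lambda\in(0,1)$ with $f(x^{t+1})=f(x^t)+\nabla f(x^t)^\top(x^{t+1}-x^t)+\frac12\|x^{t+1}-x^t\|_{\nabla^2f(x^t+\lambda(x^{t+1}-x^t))}^2$, so by the definition of $\tau$ (finite under the present hypotheses by \eqref{eq:ineq-tau}) we get $f(x^{t+1})\le f(x^t)+\nabla f(x^t)^\top(x^{t+1}-x^t)+\frac{\tau}{2}\|x^{t+1}-x^t\|_{D_t}^2$. On the other hand, the optimality of $x^{t+1}$ in the subproblem $\prox_g^{\eta^kD_t}(x^t-(\eta^kD_t)^{-1}\nabla f(x^t))$ gives $g(x^{t+1})+\nabla f(x^t)^\top(x^{t+1}-x^t)+\frac{\eta^k}{2}\|x^{t+1}-x^t\|_{D_t}^2\le g(x^t)$. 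Adding $g(x^{t+1})$ to the first estimate and substituting the second yields $F(x^{t+1})\le F(x^t)-\frac{\eta^k-\tau}{2}\|x^{t+1}-x^t\|_{D_t}^2$.

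If $\eta^k\ge\frac{\tau}{1-\alpha}$, then $\eta^k-\tau\ge\alpha\eta^k$, so the previous display together with $\|x^{t+1}-x^t\|_{\eta^kD_t}^2=\eta^k\|x^{t+1}-x^t\|_{D_t}^2$ gives exactly $F(x^{t+1})\le F(x^t)-\frac{\alpha}{2}\|x^{t+1}-x^t\|_{\eta^kD_t}^2$; hence \eqref{eq:nonmonotone-line-search} is satisfied and $k_t$ is well-defined. For the quantitative bound I would argue as in Lemma~\ref{lem:well-def-1}: if $k_t=0$ then $\eta^{k_t}=1$, while if $k_t>0$ the test fails at $k=k_t-1$, which by the contrapositive of the previous sentence forces $\eta^{k_t-1}<\frac{\tau}{1-\alpha}$, i.e.\ $\eta^{k_t}<\frac{\eta\tau}{1-\alpha}$; taking the larger of the two bounds gives $\eta^{k_t}\le\max\{1,\frac{\eta\tau}{1-\alpha}\}$.

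There is no serious obstacle here: this is a routine adaptation of Lemma~\ref{lem:well-def-1}. The only subtlety worth noting is that \eqref{eq:nonmonotone-line-search} involves $F$ and the running maximum, so one must invoke subproblem optimality to absorb the nonsmooth term $g$ and use the trivial fact that $F(x^t)$ is itself one of the quantities in that maximum; once that reduction is made, the rest is the same Taylor-plus-definition-of-$\tau$ computation.
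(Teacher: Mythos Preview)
Your proposal is correct and follows essentially the same approach as the paper: both combine the Taylor/\,$\tau$ upper bound on $f$ with the subproblem optimality inequality for $g$ to get $F(x^{t+1})\le F(x^t)-\frac{\eta^k-\tau}{2}\|x^{t+1}-x^t\|_{D_t}^2$, observe that $(1-\alpha)\eta^k\ge\tau$ turns this into the required descent, and then bound $\eta^{k_t}$ by contraposition. The only cosmetic difference is that you state the reduction to $F(x^t)$ (via $F(x^t)\le\max_s F(x^s)$) at the outset, whereas the paper applies it at the final step.
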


\begin{proof}
Note that, as in Lemma \ref{lem:well-def-1}, it holds that
\begin{align}
    f(x^{t+1})\le f(x^t)+\nabla f(x^t)^\top(x^{t+1}-x^t)+\frac{\tau}{2}\|x^{t+1}-x^t\|_{D_t}^2.
\end{align}
The optimality of $x^{t+1}$ to the subproblem implies that
\begin{align}
    g(x^{t+1})+\frac{1}{2}\|x^{t+1}-x^t\|_{\eta^kD_t}^2+\nabla f(x^t)^\top(x^{t+1}-x^t)\le g(x^t),
\end{align}
where $D_t=\diag(\nabla^2f(x^t))$.
If $(1-\alpha)\eta^k\ge\tau$, then it follows from $D_t\succeq mI$ that
\begin{align}
    F(x^{t+1}) &\le F(x^t)-\frac{1}{2}\|x^{t+1}-x^t\|_{\eta^kD_t}^2+\frac{\tau}{2}\|x^{t+1}-x^t\|_{D_t}^2\\
    &=F(x^t)-\frac{\alpha}{2}\|x^{t+1}-x^t\|_{\eta^kD_t}^2-\frac{(1-\alpha)\eta^k}{2}\|x^{t+1}-x^t\|_{D_t}^2+\frac{\tau}{2}\|x^{t+1}-x^t\|_{D_t}^2\\
    &\le F(x^t)-\frac{\alpha}{2}\|x^{t+1}-x^t\|_{\eta^kD_t}^2\\
    &\le \max_{(t-M+1)_+\le s\le t} F(x^s)-\frac{\alpha}{2}\|x^{t+1}-x^t\|_{\eta^kD_t}^2.
\end{align}
Thus, $k_t$ is well-defined.
If $k_t>0$, then the acceptance criterion \eqref{eq:nonmonotone-line-search} is not satisfied for $k=k_t-1$ and hence $(1-\alpha)\eta^{k_t-1}<\tau$.
Consequently, it holds that $\eta^{k_t}\le\max\big\{1,\frac{\eta\tau}{(1-\alpha)}\big\}$.
\end{proof}

We first establish the global convergence of the NPDNM.

\begin{theorem}\label{thm:global-convergence-2}
Suppose that all assumptions of Lemma \ref{lem:well-def-2} are satisfied and $\{x^t\}_{t=0}^\infty$ is generated by Algorithm \ref{alg:NPDNM} without termination.
We assume further that there exists a set $\mathcal{D}\supset\{x^t\}_{t=0}^\infty$ such that $F$ is uniformly continuous on $\mathcal{D}$ if $M>1$.
Then, $\|x^{t+1}-x^t\|_2$ converges to $0$ and any accumulation point of $\{x^t\}_{t=0}^\infty$ is a d-stationary point of \eqref{problem:general}.
Moreover, it holds that $\lim_{t\to\infty}\dist(x^t, \mathcal{X}^*)=0$ if $\{x^t\}_{t=0}^\infty$ is bounded, where $\mathcal{X}^*$ is the set of d-stationary points of \eqref{problem:general}.
\end{theorem}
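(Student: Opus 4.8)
The plan is to reduce the theorem to two ingredients: (i) the scaled metrics $\eta^{k_t}D_t$ (with $D_t=\diag(\nabla^2f(x^t))$) stay uniformly bounded above and below, and (ii) $\|x^{t+1}-x^t\|_2\to0$. For (i), the assumption gives $D_t\succeq mI$, inequality \eqref{eq:boundedness-diag} gives $D_t\preceq L_1I$, and Lemma~\ref{lem:well-def-2} gives $\eta^{k_t}\le\max\{1,\eta\tau/(1-\alpha)\}$, so that $mI\preceq\eta^{k_t}D_t\preceq\overline{M}I$ with $\overline{M}\coloneqq\max\{1,\eta\tau/(1-\alpha)\}L_1$. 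Combining the lower bound with the acceptance criterion \eqref{eq:nonmonotone-line-search} yields the key inequality
\begin{align}
F(x^{r})\le F(x^{\ell(r-1)})-\frac{\alpha m}{2}\|x^{r}-x^{r-1}\|_2^2\qquad(r\ge1),
\end{align}
where $\ell(r)\coloneqq\argmax\{F(x^s)\mid (r-M+1)_+\le s\le r\}$.

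The heart of the proof is (ii), and I would split into two regimes. If $M=1$, then $\ell(r)=r$, so $\{F(x^t)\}$ is nonincreasing; since $F$ is bounded below it converges, and telescoping the key inequality gives $\sum_t\|x^{t+1}-x^t\|_2^2<\infty$, hence $\|x^{t+1}-x^t\|_2\to0$ --- note no uniform continuity is used here, in agreement with the statement. If $M>1$, I would run the Grippo--Lampariello--Lucidi scheme. First, $\{F(x^{\ell(t)})\}$ is nonincreasing: the index window for $\ell(t+1)$ intersected with $\{0,\dots,t\}$ is contained in that for $\ell(t)$, and $F(x^{t+1})\le F(x^{\ell(t)})$ by the key inequality; bounded below, it converges to some $F^*$. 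Since $\ell(t)\ge t-M+1\to\infty$ and, for each fixed $j$, $\ell(\ell(t)-j-1)\ge\ell(t)-j-M\to\infty$, we also have $F(x^{\ell(\ell(t)-j-1)})\to F^*$. Now prove by induction on $j\ge0$ the statement $P(j)$: $\lim_tF(x^{\ell(t)-j})=F^*$ and $\lim_t\|x^{\ell(t)-j}-x^{\ell(t)-j-1}\|_2=0$. The base case $P(0)$ follows from the key inequality with $r=\ell(t)$ together with $F(x^{\ell(t)}),F(x^{\ell(\ell(t)-1)})\to F^*$. For the inductive step, $P(j-1)$ and uniform continuity of $F$ on $\mathcal{D}$ promote $\|x^{\ell(t)-(j-1)}-x^{\ell(t)-j}\|_2\to0$ to $F(x^{\ell(t)-j})\to F^*$, and then the key inequality with $r=\ell(t)-j$ forces $\|x^{\ell(t)-j}-x^{\ell(t)-j-1}\|_2\to0$. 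Finally, for an arbitrary $t$, write $\ell(t+M+1)=(t+1)+q$ with $q\in\{1,\dots,M\}$; then $\|x^{t+1}-x^t\|_2=\|x^{\ell(t+M+1)-q}-x^{\ell(t+M+1)-q-1}\|_2\le\max_{1\le j\le M}\|x^{\ell(t+M+1)-j}-x^{\ell(t+M+1)-j-1}\|_2$, which tends to $0$ by $P(1),\dots,P(M)$.

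Once (i) and (ii) are in place, the remaining two conclusions are not new. The iterates satisfy $x^{t+1}\in\prox_g^{\eta^{k_t}D_t}(x^t-(\eta^{k_t}D_t)^{-1}\nabla f(x^t))$ with a uniformly bounded positive definite metric and $\|x^{t+1}-x^t\|_2\to0$, so Lemma~\ref{lem:global-convergence} applies exactly as in the proof of Theorem~\ref{thm:global-convergence-1}, giving that every accumulation point of $\{x^t\}$ is a d-stationary point and that $\dist(x^t,\mathcal{X}^*)\to0$ whenever $\{x^t\}$ is bounded. I expect the main obstacle to be the bookkeeping of the Grippo--Lampariello--Lucidi induction --- keeping the shifted indices $\ell(t)-j$ in range and checking that the reindexed subsequences $\ell(\ell(t)-j-1)$ still diverge --- together with being precise that uniform continuity is exactly the ingredient needed to transport the limiting value $F^*$ backward across the (bounded) averaging window; everything else is a routine adaptation of the monotone case treated in Theorem~\ref{thm:global-convergence-1}.
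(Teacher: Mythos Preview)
Your proposal is correct and takes essentially the same approach as the paper. The paper's proof is much shorter because, after verifying the metric bounds $mI\preceq\eta^{k_t}D_t\preceq\max\{1,\eta\tau/(1-\alpha)\}L_1I$ via Lemma~\ref{lem:well-def-2}, it invokes Lemma~\ref{lem:global-convergence} directly; that appendix lemma already contains the full Grippo--Lampariello--Lucidi argument (including the induction on $j$ and the use of uniform continuity) that you carry out in step~(ii), so your derivation of $\|x^{t+1}-x^t\|_2\to0$ is a faithful but redundant reconstruction of work already packaged there.
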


\begin{proof}
Let $D_t=\diag(\nabla^2f(x^t))$.
We obtain from Lemma \ref{lem:well-def-2} that
\begin{align}\label{eq:boundedness-hessian-2}
    mI\preceq\eta^{k_t}D_t\preceq\max\left\{1,\frac{\eta\tau}{(1-\alpha)}\right\}L_1I
\end{align}
in the same way as in the proof of Theorem \ref{thm:global-convergence-1}, which implies that the assumptions of Lemma \ref{lem:global-convergence} (see Appendix \ref{sec:lemma}) are satisfied.
This completes the proof.
\end{proof}

Note that $\varepsilon$-stationary point of \eqref{problem:general} can be obtained by a finite number of iterations of Algorithm \ref{alg:NPDNM} with termination criterion \eqref{eq:termination}, similarly to Algorithm \ref{alg:PDNM}.
We further mention that the assumption of the existence of a domain containing $\{x^t\}_{t=0}^\infty$ and on which $F$ is uniformly continuous is not restrictive.
Indeed, since $\{x^t\}_{t=0}^\infty\subset\{x\mid F(x)\le F(x^0)<\infty\}\subset\dom g$, the assumption is satisfied if $F$ is Lipschitz continuous on $\dom g$, or coercive and continuous on $\dom g$.
Although the squared loss function $f(x)=\frac{1}{2}\|b-Ax\|_2^2$ is not uniformly continuous and generally not coercive, the assumption is satisfied for $g$ in the following remark.

\begin{remark}\label{rem:uniform-continuity}
Let $f(x)=\frac{1}{2}\|b-Ax\|_2^2$.
Suppose that $g$ is uniformly continuous on $\dom g$ and bounded below, that is, there exists $l_g\in\mathbb{R}$ such that $g(x)\ge l_g$ for any $x$.
Then, for any $U\in\mathbb{R}$, $F$ is uniformly continuous on $\{x\mid F(x)\le U\}$.
\end{remark}

\begin{proof}
Let $\mathcal{D}=\{x\mid F(x)\le U\}$ and $l_g$ be a lower bound of $g$.
For any $x\in\mathcal{D}$, since it holds that
\begin{align}
    \frac{1}{2}\|b-Ax\|_2^2+l_g\le F(x)\le U,
\end{align}
we have $\|b-Ax\|_2\le\sqrt{2(U-l_g)}$.
It follows from this that
\begin{align}
    \|\nabla f(x)\|_2=\|A^\top(Ax-b)\|_2\le\|A\|_{2,2}\|Ax-b\|_2\le\|A\|_{2,2}\sqrt{2(U-l_g)}
\end{align}
for all $x\in\mathcal{D}$, which implies that $f$ is Lipschitz continuous on $\mathcal{D}$.
Thus, $F$ is uniformly continuous on $\mathcal{D}$.
\end{proof}

For example, the $\ell_1$ norm, the capped $\ell_1$ norm \citep{zhang2010analysis}, and the trimmed $\ell_1$ norm \citep{luo2013new,huang2015two} satisfy the assumption on $g$ in Remark \ref{rem:uniform-continuity}.
These nonsmooth functions are used in the numerical experiments (the proximal mappings of the latter two functions are derived in section \ref{sec:prox-calculus}).

The Q-quadratic convergence property under the complete separability assumption is also valid for the NPDNM.

\begin{theorem}\label{thm:quadratic-convergence-2}
Let $\{x^t\}_{t=0}^\infty$ be a sequence generated by Algorithm \ref{alg:NPDNM} without termination and $f(x)=\sum_{i=1}^n f_i(x_i)$, where $f_i:\mathbb{R}\to\mathbb{R}$.
We suppose that $f$ is $m$-strongly convex, $\nabla f$ and $\nabla^2 f$ are Lipschitz continuous with constant $L_1$ and $L_2$, $g$ is convex, and there exist a set $\mathcal{D}\supset\{x^t\}_{t=0}^\infty$ such that $F$ is uniformly continuous on $\mathcal{D}$, where the last assumption is not necessary when $M=1$.
Then, the sequence $\{x^t\}_{t=0}^\infty$ converges to the unique optimal solution of \eqref{problem:general}.
Furthermore, for sufficiently large $t$, $k_t=0$ holds and the sequence converges Q-quadratically, where $k_t$ is the smallest nonnegative integer $k$ that satisfies \eqref{eq:nonmonotone-line-search}.
\end{theorem}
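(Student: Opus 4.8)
The plan is to run the argument of Theorem~\ref{thm:quadratic-convergence-1} almost verbatim, replacing the monotone descent bookkeeping by the standard nonmonotone one wherever it is actually used.

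Step~1 (convergence to the minimizer). By complete separability and $m$-strong convexity, $D_t\coloneqq\diag(\nabla^2 f(x^t))=\nabla^2 f(x^t)\succeq mI$, so the subproblems and line search are well defined (Lemma~\ref{lem:well-def-2}). From \eqref{eq:nonmonotone-line-search} we get $F(x^{t+1})\le\max_{(t-M+1)_+\le s\le t}F(x^s)$, and a routine induction shows that the window maximum $R_t\coloneqq\max_{(t-M+1)_+\le s\le t}F(x^s)$ is non-increasing; hence $F(x^t)\le R_0=F(x^0)$ for every $t$. Since $F=f+g$ is $m$-strongly convex, its sublevel set at level $F(x^0)$ is bounded, so $\{x^t\}$ is bounded. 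Invoking Theorem~\ref{thm:global-convergence-2} (its uniform-continuity hypothesis for $M>1$ being assumed here) and using that for the convex problem \eqref{problem:general} every d-stationary point is a global minimizer while the strongly convex $F$ has a unique minimizer $x^*$, one gets $\mathcal{X}^*=\{x^*\}$ and $\|x^t-x^*\|_2=\dist(x^t,\mathcal{X}^*)\to0$.

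Step~2 (the two local estimates). Put $y^t\coloneqq\prox_g^{D_t}(x^t-D_t^{-1}\nabla f(x^t))$. The inequalities \eqref{eq:cubic-ineq} and \eqref{eq:quadratic} from the proof of Theorem~\ref{thm:quadratic-convergence-1} do not use the line search and carry over verbatim: Lipschitz continuity of $\nabla^2 f$ gives the cubic bound $f(y^t)\le f(x^t)+\nabla f(x^t)^\top(y^t-x^t)+\frac12\|y^t-x^t\|_{D_t}^2\bigl(1+\frac{L_2}{3m}\|y^t-x^t\|_2\bigr)$, and nonexpansiveness of $\prox_g^{D_t}$ in $\|\cdot\|_{D_t}$ together with $x^*=\prox_g^{D_t}(x^*-D_t^{-1}\nabla f(x^*))$ and $D_t=\nabla^2 f(x^t)$ gives $\|y^t-x^*\|_2\le\frac{L_2}{2m}\|x^t-x^*\|_2^2$. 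Combining the latter with $x^t\to x^*$ yields $\|y^t-x^t\|_2\le\|x^t-x^*\|_2+\frac{L_2}{2m}\|x^t-x^*\|_2^2\to0$.

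Step~3 (eventually $k_t=0$, then Q-quadratic rate). Reading the cubic bound as condition \eqref{eq:p-line-search} with $x=x^t,\ x^+=y^t,\ H=D_t,\ \beta=1+\frac{L_2}{3m}\|y^t-x^t\|_2$, Lemma~\ref{lem:f-p-g-ineq} with $y=x^t$ gives $F(y^t)\le F(x^t)-\frac12\bigl(1-\frac{L_2}{3m}\|y^t-x^t\|_2\bigr)\|y^t-x^t\|_{D_t}^2$. Since $\|y^t-x^t\|_2\to0$ and $\alpha<1$, for all large $t$ we have $1-\frac{L_2}{3m}\|y^t-x^t\|_2\ge\alpha$, hence $F(y^t)\le F(x^t)-\frac{\alpha}{2}\|y^t-x^t\|_{D_t}^2\le\max_{(t-M+1)_+\le s\le t}F(x^s)-\frac{\alpha}{2}\|y^t-x^t\|_{D_t}^2$, i.e.\ \eqref{eq:nonmonotone-line-search} holds with $k=0$, so $k_t=0$ and $x^{t+1}=y^t$. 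Then $\|x^{t+1}-x^*\|_2\le\frac{L_2}{2m}\|x^t-x^*\|_2^2$ for all large $t$, which is the claimed Q-quadratic convergence. The only real obstacle is bookkeeping specific to the nonmonotone rule: one must re-derive boundedness of $\{x^t\}$ from the non-increasing window maximum rather than from a monotone decrease of $F$, and check the $k=0$ step against that maximum—which is immediate from the one-step decrease $F(y^t)\le F(x^t)-\frac{\alpha}{2}\|y^t-x^t\|_{D_t}^2$, as $F(x^t)$ is itself one of the terms in the maximum. Everything else is identical to the monotone case.
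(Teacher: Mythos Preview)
Your proposal is correct and follows essentially the same route as the paper's proof: global convergence via boundedness of $\{x^t\}$ and Theorem~\ref{thm:global-convergence-2}, the cubic estimate and the quadratic contraction \eqref{eq:quadratic} carried over from Theorem~\ref{thm:quadratic-convergence-1}, and then the one-step descent inequality $F(y^t)\le F(x^t)-\frac{1}{2}\bigl(1-\frac{L_2}{3m}\|y^t-x^t\|_2\bigr)\|y^t-x^t\|_{D_t}^2$ to verify \eqref{eq:nonmonotone-line-search} at $k=0$ for large $t$. The only cosmetic difference is that you obtain this descent inequality by invoking Lemma~\ref{lem:f-p-g-ineq} with $y=x^t$, whereas the paper re-derives it directly from the $1$-strong convexity (in $\|\cdot\|_{D_t}$) of the subproblem objective; the two arguments are equivalent. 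Your explicit window-maximum bookkeeping to justify $F(x^t)\le F(x^0)$ is a welcome clarification of the paper's terse ``Note that $F(x^t)\le F(x^0)$.''
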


\begin{proof}
Note that $F(x^t)\le F(x^0)<\infty$.
From the same reasons as in Theorem \ref{thm:quadratic-convergence-1}, the sequence $\{x^t\}_{t=0}^\infty$ converges to the unique optimal solution of \eqref{problem:general}.

Letting
\begin{align}
    y^t=\prox_g^{D_t}\left(x^t-D_t^{-1}\nabla f(x^t)\right),
\end{align}
we have \eqref{eq:cubic-ineq} from the Lipschitz continuity of $\nabla^2f$ , where $D_t=\nabla^2f(x^t)$.
Since the function $g+\frac{1}{2}\|\cdot-(x^t-D_t^{-1}\nabla f(x^t))\|_{D_t}^2$ is $1$-strongly convex with respect to $\|\cdot\|_{D_t}$, it follow from \citet[Theorem 5.25]{beck2017first} that
\begin{align}
    g(x^t)+\frac{1}{2}\|x^t-(x^t-D_t^{-1}\nabla f(x^t))\|_{D_t}^2-g(y^t)-\frac{1}{2}\|y^t-(x^t-D_t^{-1}\nabla f(x^t))\|_{D_t}^2\ge\frac{1}{2}\|y^t-x^t\|_{D_t}^2,
\end{align}
which is equivalent to
\begin{align}
      g(y^t)\le g(x^t)-\nabla f(x^t)^\top(y^t-x^t)-\|y^t-x^t\|_{D_t}^2.
\end{align}
Combining this with \eqref{eq:cubic-ineq}, we have
\begin{align}
    F(y^t) &\le F(x^t)-\frac{1}{2}\|y^t-x^t\|_{D_t}^2\left(1-\frac{L_2}{3m}\|y^t-x^t\|_2\right)\\
    &\le\max_{(t-M+1)_+\le s\le t} F(x^s)-\frac{1}{2}\|y^t-x^t\|_{D_t}^2\left(1-\frac{L_2}{3m}\|y^t-x^t\|_2\right).
\end{align}
Similarity to the proof of Theorem \ref{thm:quadratic-convergence-1}, it holds that \eqref{eq:quadratic} and $\|y^t-x^t\|_2\to0$, and hence the inequality $\alpha\le1-\frac{L_2}{3m}\|y^t-x^t\|_2$ holds for sufficient large $t$.
Therefore, for sufficient large $t$, it holds that $x^{t+1}=y^t$, namely, $k_t=0$.
This and \eqref{eq:quadratic} imply that $\{x^t\}_{t=0}^\infty$ converges quadratically.
\end{proof}

\section{Calculation of scaled proximal mappings of nonconvex functions}\label{sec:prox-calculus}
The bottleneck in the PDNM and NPDNM is the calculation of the diagonal scaled proximal mapping of $g$.
To the best of our knowledge, the computations of scaled proximal mappings for diagonal metrics have not been considered except for those for convex $g$'s by \citet{park2019variable}.
It is easy to see that the diagonal scaled proximal mapping of $g$ can be obtained even if $g$ is nonconvex, as long as $g$ is separable and the proximal mapping of each univariate function can be obtained.
For example, the diagonal scaled proximal mapping of the capped $\ell_1$ norm \citep{zhang2010analysis}, which is defined by
\begin{align}
    g_1(x)\coloneqq\sum_{i\in[n]}\min\{a|x_i|,1\}
\end{align}
with $a>0$, is given as follows.

\begin{proposition}
Let $\lambda>0,~ x\in\mathbb{R}^n$, and $D$ be a diagonal matrix whose elements are positive.
It holds that
\begin{align}\label{eq:separability-prox}
    \prox_{\lambda g_1}^D(x)=\prod_{i\in[n]}\prox_{\lambda\min\{a|\cdot|,1\}}^{d_i}(x_i)
\end{align}
where $d_i$ is the $i$-th diagonal component of $D$,
\begin{align}
    \prox_{\lambda\min\{a|\cdot|,1\}}^{d_i}(x_i)=\begin{cases}
        \{\mathcal{S}_\frac{\lambda a}{d_i}(x_i)\}, &\left(|x_i|\le\frac{\lambda a}{d_i}~ \mbox{and}~ |x_i|<\sqrt{\frac{2\lambda}{d_i}}\right)~ \mbox{or}~ \left(\frac{\lambda a}{d_i}<|x_i|<\frac{\lambda a}{2d_i}+\frac{1}{a}\right),\\
        \{x_i,\mathcal{S}_\frac{\lambda a}{d_i}(x_i)\}, &\left(|x_i|=\sqrt{\frac{2\lambda}{d_i}}\le\frac{\lambda a}{d_i}\right)~ \mbox{or}~ \left(|x_i|=\frac{\lambda a}{2d_i}+\frac{1}{a}>\frac{\lambda a}{d_i}\right),\\
        \{x_i\}, &\left(\sqrt{\frac{2\lambda}{d_i}}<|x_i|\le\frac{\lambda a}{d_i}\right)~ \mbox{or}~ \left(|x_i|>\max\{\frac{\lambda a}{d_i},\frac{\lambda a}{2d_i}+\frac{1}{a}\}\right),
    \end{cases}
\end{align}
and $\mathcal{S}_c(\xi)=\sign(\xi)(|\xi|-c)_+$.
\end{proposition}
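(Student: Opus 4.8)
The plan is to reduce the $n$‑dimensional scaled proximal mapping to $n$ scalar problems and then solve each scalar problem by an explicit (if somewhat tedious) case analysis.

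First I would exploit the diagonal structure of $D$: since $D=\diag(d_1,\dots,d_n)$ with $d_i>0$, we have $\lambda g_1(y)+\tfrac12\|y-x\|_D^2=\sum_{i\in[n]}\bigl(\lambda\min\{a|y_i|,1\}+\tfrac{d_i}{2}(y_i-x_i)^2\bigr)$, so the minimization defining $\prox_{\lambda g_1}^D(x)$ separates coordinatewise, which is exactly \eqref{eq:separability-prox}. It therefore suffices to compute, for fixed $d=d_i>0$ and $\xi=x_i\in\mathbb{R}$, the set of minimizers of $\phi(u)\coloneqq\lambda\min\{a|u|,1\}+\tfrac{d}{2}(u-\xi)^2$. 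Because $\min\{a|u|,1\}$ is even, comparing $\phi(-u)$ with $\phi(u)$ shows $\phi(\sign(\xi)|u|)\le\phi(u)$ for all $u$, so a minimizer can be taken with the sign of $\xi$; hence it suffices to treat $\xi\ge0$ and minimize over $u\ge0$, the general formula following from the oddness of $\mathcal{S}_c$ and the evenness of $g_1$.

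Next I would split the scalar problem at $u=1/a$, the breakpoint of $\min\{au,1\}$. On $[0,1/a]$ the objective is the strictly convex quadratic $h_A(u)=\lambda a u+\tfrac{d}{2}(u-\xi)^2$, whose unconstrained minimizer is $\xi-\lambda a/d$; projecting onto $[0,1/a]$ gives the restricted minimizer $\min\{(\xi-\lambda a/d)_+,\,1/a\}$, which equals $\mathcal{S}_{\lambda a/d}(\xi)$ whenever $\xi-\lambda a/d\le1/a$ and equals $1/a$ otherwise. On $[1/a,\infty)$ the objective is $h_B(u)=\lambda+\tfrac{d}{2}(u-\xi)^2$, with restricted minimizer $\max\{\xi,1/a\}$. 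Since $[0,1/a]\cup[1/a,\infty)=[0,\infty)$ and the two pieces agree at $1/a$, the minimizers of $\phi$ on $[0,\infty)$ are determined by comparing the two optimal values $v_A$ and $v_B$, which I would record in closed form ($v_A\in\{\tfrac d2\xi^2,\ \lambda a\xi-\tfrac{\lambda^2a^2}{2d},\ \lambda+\tfrac d2(\xi-1/a)^2\}$ according to the position of $\xi-\lambda a/d$ relative to $[0,1/a]$, and $v_B\in\{\lambda,\ \lambda+\tfrac d2(\xi-1/a)^2\}$).

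Finally I would carry out the comparison $v_A$ versus $v_B$ over the parameter regimes. The two genuinely new thresholds appear here: when $\xi\le\lambda a/d$ (so the soft‑threshold output is $0$) one compares $\tfrac d2\xi^2$ with $\lambda$, giving the cutoff $\sqrt{2\lambda/d}$; and when $\lambda a/d<\xi\le\lambda a/d+1/a$ with $\xi\ge1/a$ one compares $\lambda a\xi-\tfrac{\lambda^2a^2}{2d}$ with $\lambda$, giving the cutoff $\tfrac1a+\tfrac{\lambda a}{2d}$. In the remaining sub‑regions one side strictly dominates: if $\xi<1/a$ the minimizer of $h_A$ over $[0,1/a]$ lies strictly below $1/a$, so $v_A<h_A(1/a)=v_B$ and the soft‑threshold output wins; if $\xi-\lambda a/d>1/a$ then $v_A=\lambda+\tfrac d2(\xi-1/a)^2>\lambda=v_B$ and keeping $\xi$ wins. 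Assembling these cases — with a tie in either comparison producing the two‑element set $\{x_i,\mathcal{S}_{\lambda a/d}(x_i)\}$, and then re‑expressing everything through $|x_i|$ and restoring signs via the first step — reproduces exactly the three branches in the statement. The only real obstacle is the bookkeeping in this last step: tracking which formula for $v_A$ is active, whether the $[1/a,\infty)$‑minimizer is interior ($\xi$) or on the boundary ($1/a$), and verifying that the regions delimited by $\lambda a/d$, $\sqrt{2\lambda/d}$, and $\tfrac1a+\tfrac{\lambda a}{2d}$ coincide with the claimed partition (for instance that $\xi\le\lambda a/d$ together with $\xi<1/a$ already forces $\xi<\sqrt{2\lambda/d}$); there is no conceptual difficulty, only careful algebra.
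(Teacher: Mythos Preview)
Your proposal is correct and reaches the same endpoint as the paper, but the decomposition you use is more laborious than necessary. You split the scalar problem by \emph{domain}, writing $[0,\infty)=[0,1/a]\cup[1/a,\infty)$ and minimizing $h_A,h_B$ over these intervals; this forces you to project onto intervals, track whether the constrained minimizer of $h_A$ sits at the boundary $1/a$, and then argue separately that the boundary contributions cancel (your ``$\xi<1/a$'' and ``$\xi-\lambda a/d>1/a$'' sub-cases). The paper instead splits by \emph{value}: since $\min\{a|y|,1\}$ is itself a minimum of two functions, one has
\[
\min_{y}\Big\{\lambda\min\{a|y|,1\}+\tfrac{d_i}{2}(y-x_i)^2\Big\}
=\min\Big\{\min_{y}\big\{\lambda a|y|+\tfrac{d_i}{2}(y-x_i)^2\big\},\ \min_{y}\big\{\lambda+\tfrac{d_i}{2}(y-x_i)^2\big\}\Big\},
\]
so the two candidates are the \emph{unconstrained} minimizers $\mathcal{S}_{\lambda a/d_i}(x_i)$ and $x_i$, with values given directly by the standard soft-threshold formula and by $\lambda$. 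The comparison then reduces immediately to the two thresholds $\sqrt{2\lambda/d_i}$ and $\tfrac{1}{a}+\tfrac{\lambda a}{2d_i}$, with no boundary bookkeeping and no need for the preliminary sign reduction. Both routes yield the stated three-branch formula; the paper's just gets there with less case analysis.
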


\begin{proof}
The equality \eqref{eq:separability-prox} is obvious from the complete separability of $g_1$.
The minimization problem in $\prox_{\lambda\min\{a|\cdot|,1\}}^{d_i}(x_i)$ can be equivalently rewritten as
\begin{align}
    \min_{y\in\mathbb{R}}\left\{\lambda\min\{a|y|,1\}+\frac{d_i}{2}(y-x_i)^2\right\}=\min\bigg\{\underbrace{\min_{y\in\mathbb{R}}\left\{\lambda a|y|+\frac{d_i}{2}(y-x_i)^2\right\}}_{(P_1)},~\underbrace{\min_{y\in\mathbb{R}}\left\{\lambda+\frac{d_i}{2}(y-x_i)^2\right\}}_{(P_2)}\bigg\}.
\end{align}
It is well known that the minimum of $(P_1)$ is given by
\begin{align}
    \min_{y\in\mathbb{R}}\left\{\lambda a|y|+\frac{d_i}{2}(y-x_i)^2\right\}=
    \begin{cases}
        \frac{d_i}{2}x_i^2, &|x_i|\le\frac{\lambda a}{d_i},\\
        \lambda a|x_i|-\frac{(\lambda a)^2}{2d_i}, &|x_i|>\frac{\lambda a}{d_i},
    \end{cases}
\end{align}
which is attained at $\mathcal{S}_\frac{\lambda a}{d_i}(x_i)$.
On the other hand, it is clear that the minimum and minimizer of $(P_2)$ are $\lambda$ and $x_i$, respectively.
We have the desired result by comparing the minima of $(P_1)$ and $(P_2)$.
\end{proof}

Although the trimmed $\ell_1$ norm \citep{luo2013new,huang2015two}, which is defined by
\begin{align}
    g_2(x)\coloneqq\min_{\substack{\Lambda\subset[n]\\|\Lambda|=n-K}}\sum_{i\in\Lambda}|x_i|
\end{align}
with $K\in\{0,1,\ldots,n\}$, is a nonseparable nonconvex function, its diagonal scaled proximal mapping is explicitly obtained.

\begin{proposition}
Let $\lambda>0,~ x\in\mathbb{R}^n$, and $D$ be a diagonal matrix whose elements are positive.
We define $\mathcal{I}$ to be the set consisting of index sets $I$ of size $n-K$ such that $\phi(x_i,d_i)\le\phi(x_j,d_j)$ for any $i\in I,~ j\notin I$, where
\begin{align}
    \phi(x_i,d_i)=
    \begin{cases}
        \frac{d_i}{2}x_i^2, &|x_i|\le\frac{\lambda}{d_i},\\
        \lambda|x_i|-\frac{\lambda^2}{2d_i}, &|x_i|>\frac{\lambda}{d_i}.
    \end{cases}
\end{align}
The set $\prox_{\lambda g_2}^D(x)$ consists of a vector $x^*$ such that
\begin{align}
    x^*_i=
    \begin{cases}
        \mathcal{S}_\frac{\lambda}{d_i}(x_i), &i\in I,\\
        x_i, &i\notin I
    \end{cases}
\end{align}
for some $I\in\mathcal{I}$.
\end{proposition}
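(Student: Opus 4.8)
The plan is to use the min-representation of the trimmed $\ell_1$ norm to turn the scaled proximal problem into a combinatorial choice of an index set followed by $n$ independent one-dimensional soft-thresholding problems.

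First I would set $h(y)\coloneqq\lambda g_2(y)+\frac12\|y-x\|_D^2$ and substitute $\lambda g_2(y)=\min_{|\Lambda|=n-K}\lambda\sum_{i\in\Lambda}|y_i|$. Since the minimization over $\Lambda$ ranges over a finite collection, it commutes with the minimization over $y$, giving
\begin{align}
    \min_y h(y)=\min_{|\Lambda|=n-K}\ \min_y\Big[\lambda\sum_{i\in\Lambda}|y_i|+\frac12\sum_{i\in[n]}d_i(y_i-x_i)^2\Big].
\end{align}
For a fixed $\Lambda$ the inner problem separates over coordinates: for $i\notin\Lambda$ the unique minimizer is $y_i=x_i$, contributing $0$; for $i\in\Lambda$ the unique minimizer of $\lambda|y_i|+\frac{d_i}{2}(y_i-x_i)^2$ is $\mathcal{S}_{\lambda/d_i}(x_i)$, with optimal value exactly $\phi(x_i,d_i)$ — this is the classical univariate computation, seen at once after the rescaling $z=\sqrt{d_i}\,y_i$, $w=\sqrt{d_i}\,x_i$, which reduces it to $\min_z\big[(\lambda/\sqrt{d_i})|z|+\frac12(z-w)^2\big]$. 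Hence the inner minimum equals $\sum_{i\in\Lambda}\phi(x_i,d_i)$.

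Next I would minimize $\sum_{i\in\Lambda}\phi(x_i,d_i)$ over all $\Lambda$ with $|\Lambda|=n-K$: being a sum of a fixed number of terms drawn from $\{\phi(x_i,d_i)\}_{i\in[n]}$, its minimum is attained precisely when $\Lambda$ collects $n-K$ indices carrying the smallest values of $\phi(\cdot,\cdot)$, which is exactly the condition $\Lambda\in\mathcal{I}$. Combined with the preceding paragraph, this identifies $\min_y h(y)$ and shows that every $x^*$ of the stated form (associated with some $I\in\mathcal{I}$) attains it, hence lies in $\prox_{\lambda g_2}^D(x)$.

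For the reverse inclusion I would take an arbitrary minimizer $y^*$ of $h$, choose $\Lambda^*$ with $|\Lambda^*|=n-K$ realizing $g_2(y^*)=\sum_{i\in\Lambda^*}|y^*_i|$, and run the chain
\begin{align}
    \min_y h(y)=\lambda\sum_{i\in\Lambda^*}|y^*_i|+\frac12\|y^*-x\|_D^2\ \ge\ \sum_{i\in\Lambda^*}\phi(x_i,d_i)\ \ge\ \min_{|\Lambda|=n-K}\sum_{i\in\Lambda}\phi(x_i,d_i)=\min_y h(y),
\end{align}
so that equality holds throughout; this forces $\Lambda^*\in\mathcal{I}$ and forces $y^*$ to coincide with the (unique, by strict convexity in each coordinate since $d_i>0$) minimizer of the inner problem for $\Lambda^*$, i.e.\ $y^*_i=\mathcal{S}_{\lambda/d_i}(x_i)$ for $i\in\Lambda^*$ and $y^*_i=x_i$ otherwise. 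I do not foresee a genuine obstacle: the proof is essentially the swap of two minimizations plus a sorting argument, and the only points that need care are justifying that the minima are attained (coercivity in $y$, finiteness in $\Lambda$), keeping straight the direction of the selection inequality defining $\mathcal{I}$, and letting the set-valued conclusion absorb ties in $\phi$ and the possible non-uniqueness of $\Lambda^*$.
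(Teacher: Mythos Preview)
Your proposal is correct and follows essentially the same approach as the paper: swap the minimization over $y$ and over $\Lambda$, separate the inner problem coordinatewise, identify the inner optimal value as $\sum_{i\in\Lambda}\phi(x_i,d_i)$, and then pick the $n-K$ smallest $\phi$-values. The paper's proof stops at the reformulation $\min_y h(y)=\min_{|\Lambda|=n-K}\sum_{i\in\Lambda}\phi(x_i,d_i)$ and declares the result, whereas you additionally spell out the reverse inclusion via the equality chain and strict convexity of each coordinate subproblem; this extra care is justified since the statement asserts an exact set equality for $\prox_{\lambda g_2}^D(x)$.
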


\begin{proof}
The minimization problem in $\prox_{\lambda g_2}^D(x)$ can be equivalently rewritten as
\begin{align}
    &\min_{y\in\mathbb{R}^n}\bigg\{\lambda\min_{\substack{\Lambda\subset[n]\\|\Lambda|=n-K}}\sum_{i\in\Lambda}|y_i|+\frac{1}{2}\sum_{i\in[n]}d_i(y_i-x_i)^2\bigg\}\\
    &=\min_{\substack{\Lambda\subset[n]\\|\Lambda|=n-K}}\min_{y\in\mathbb{R}^n}\bigg\{\lambda\sum_{i\in\Lambda}|y_i|+\frac{1}{2}\sum_{i\in[n]}d_i(y_i-x_i)^2\bigg\}\\
    &=\min_{\substack{\Lambda\subset[n]\\|\Lambda|=n-K}}\bigg\{\sum_{i\in\Lambda}\min_{y_i\in\mathbb{R}}\left\{\lambda|y_i|+\frac{d_i}{2}(y_i-x_i)^2\right\}+\sum_{i\notin\Lambda}\min_{y_i\in\mathbb{R}}\left\{\frac{d_i}{2}(y_i-x_i)^2\right\}\bigg\}\\
    &=\min_{\substack{\Lambda\subset[n]\\|\Lambda|=n-K}}\bigg\{\sum_{i\in\Lambda}\phi(x_i,d_i)\bigg\}.
\end{align}
This completes the proof.
\end{proof}

\section{Numerical experiments}\label{sec:experiments}
To illustrate the advantages of our proposed methods, we conducted several numerical experiments.
In Subsection \ref{subsec:quadratic}, numerical results that support Theorem \ref{thm:r-linear-convergence} are provided.
Then, experiments on sparse regression problems using real data are given in Subsections \ref{subsec:regression} and \ref{subsec:logistic}.
The algorithms compared with our proposed algorithms in the experiments are listed below.

\begin{itemize}
    \item \textbf{PGM \citep{parikh2014proximal} with BB rule.}
    The PGM is a typical first-order method for solving problems of the form \eqref{problem:general}.
    A similar line search as in Algorithm \ref{alg:PDNM} is used, and the initial step size is determined by the Barzilai--Borwein (BB) rule \citep{barzilai1988two}.
    \item \textbf{SpaRSA \citep{wright2009sparse}.}
    The SpaRSA applies nonmonotone line search \citep{grippo1986nonmonotone,grippo2002nonmonotone}, which is similar to one as in Algorithm \ref{alg:NPDNM}, and the BB rule to the PGM.
    In \citet{wright2009sparse}, only global convergence for convex optimization problems was discussed, but global convergence property also holds for nonconvex cases \citep{gong2013general}.
    \item \textbf{FISTA \citep{beck2009fast}.}
    The fast iterative shrinkage-thresholding algorithm (FISTA) is an accelerated PGM for convex composite optimization problems proposed by \citet{beck2009fast}.
    \item \textbf{PGM with DBB rule \citep{park2020variable}.}
    The diagonal Barzilai--Borwein (DBB) rule is a method introduced by \citet{park2020variable} to determine the diagonal metric at each iteration without restricting to scalar multiples of the identity matrix.
    A similar nonmonotone line search technique as in the SpaRSA and Algorithm \ref{alg:NPDNM} is applied. 
    Although global convergence was established only for convex cases \citep{park2020variable}, it can be possible for nonconvex cases.
    \item \textbf{ZeroSR1 \citep{becker2012quasi}.}
    The ZeroSR1 is a PNM in which a memoryless symmetric rank-one formula determines the metric.
    For the case where $g$ is a scalar multiple of the $\ell_1$ norm, which is contained in our experiments, its scaled proximal mapping is computed exactly \citep{becker2019quasi}.
    On the other hand, an exact formula of the scaled proximal mapping with symmetric rank-one metric has not been found for nonconvex cases.
\end{itemize}

Algorithms \ref{alg:PDNM} and \ref{alg:NPDNM} were terminated when the terminated criterion \eqref{eq:termination} with $\varepsilon=10^{-12}$ was satisfied or the number of iterations reached $t_{\max}$.
In Subsection \ref{subsec:quadratic}, \ref{subsec:regression}, and \ref{subsec:logistic}, $t_{\max}$ was set to $10^3$, $10^5$, and $10^5$, respectively.
Similar terminated criteria were applied to the compared algorithms.
For the PDNMs, we set $\eta=2,~ \alpha=10^{-2},~ M=5$.
The minimum objective value among all sequences generated by the algorithms is denoted by $F^*$ for each experiment.
For all algorithms, the initial point was set as the origin.
All the algorithms were implemented in MATLAB R2021b, and all the computations were conducted on a PC with OS: Windows CPU: 3.20 GHz and 16.0 GB memory.

\begin{figure}[p]
    \centering
    \includegraphics[width=40em]{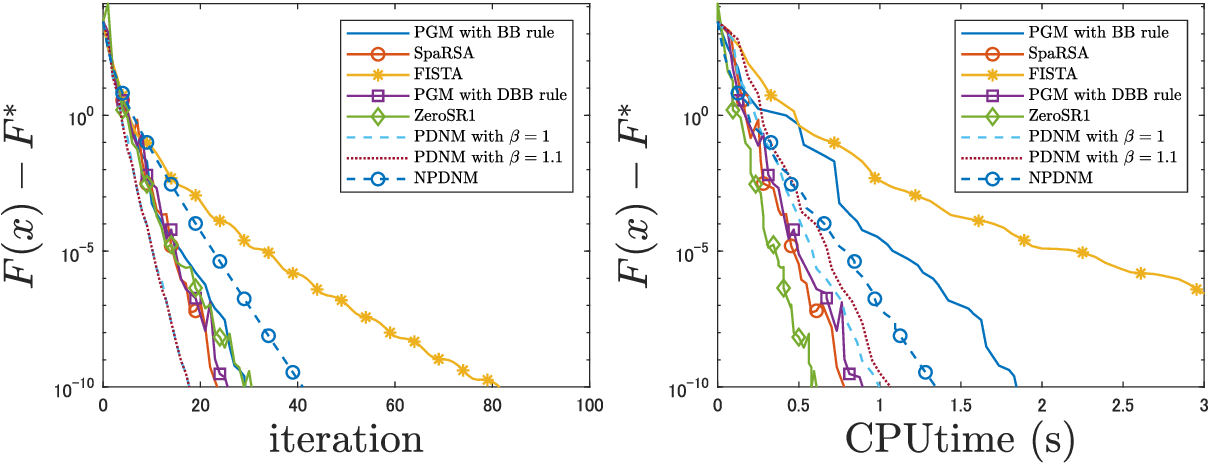}
    \caption{Convergence behaviors of the objective value ($g(x)=\|x\|_1$, $\lambda=0.3$).}
    \label{fig:synthetic-LASSO-03}
    \vspace{20pt}
    \includegraphics[width=40em]{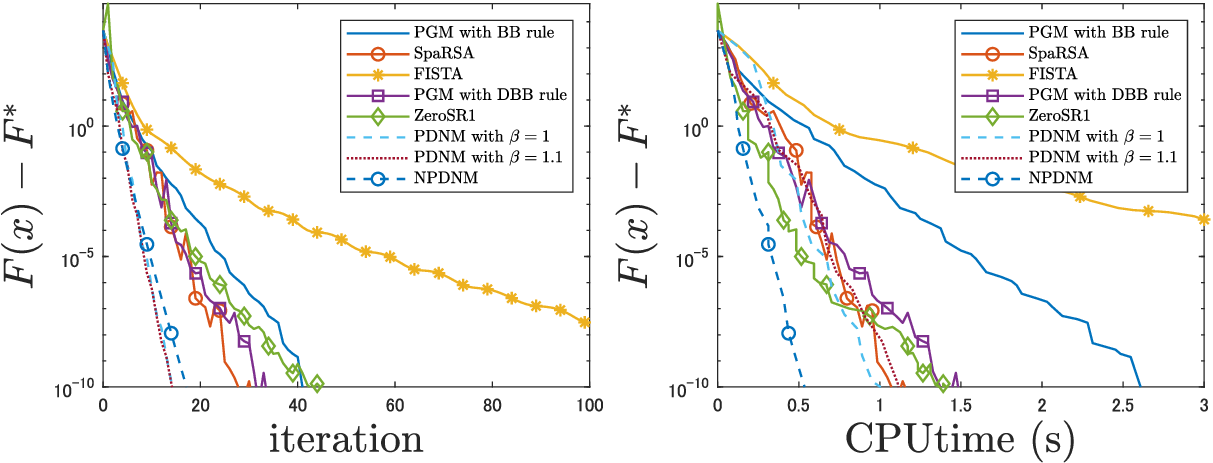}
    \caption{Convergence behaviors of the objective value ($g(x)=\|x\|_1$, $\lambda=0.5$).}
    \label{fig:synthetic-LASSO-05}
    \vspace{20pt}
    \includegraphics[width=40em]{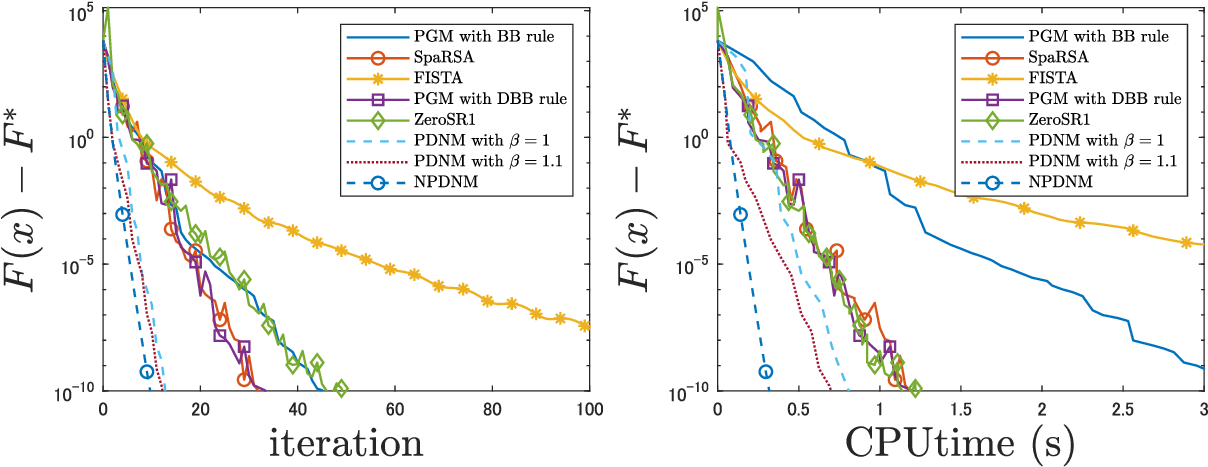}
    \caption{Convergence behaviors of the objective value ($g(x)=\|x\|_1$, $\lambda=0.7$).}
    \label{fig:synthetic-LASSO-07}
\end{figure}

\begin{figure}[p]
    \centering
    \includegraphics[width=40em]{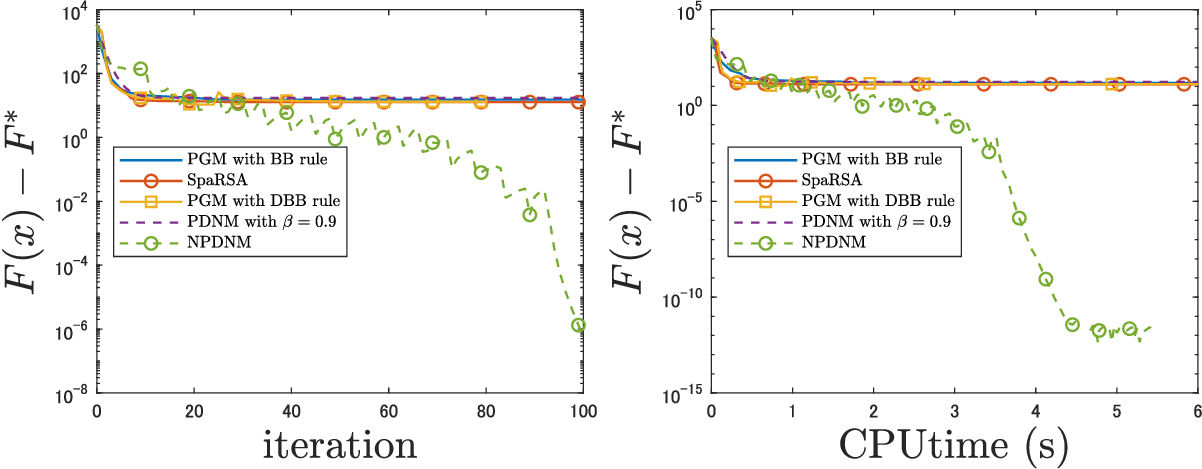}
    \caption{Convergence behaviors of the objective value ($g=g_1$, $\lambda=0.3$).}
    \label{fig:synthetic-CL-03}
    \vspace{20pt}
    \includegraphics[width=40em]{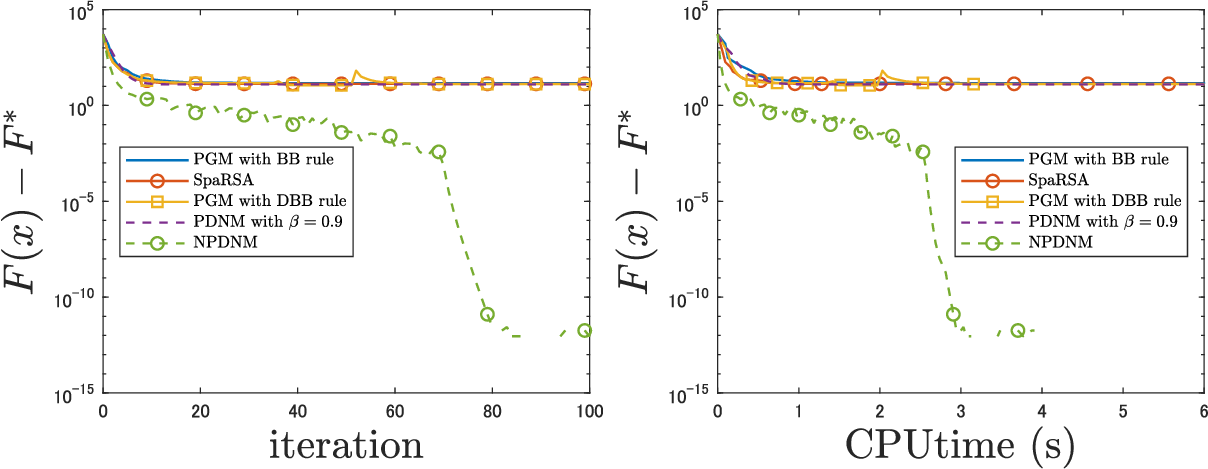}
    \caption{Convergence behaviors of the objective value ($g=g_1$, $\lambda=0.5$).}
    \label{fig:synthetic-CL-05}
    \vspace{20pt}
    \includegraphics[width=40em]{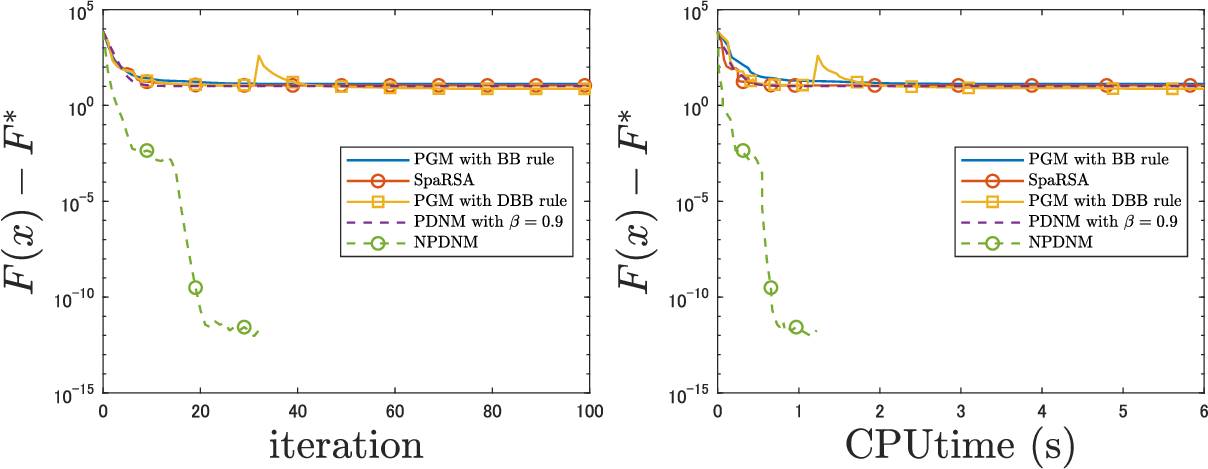}
    \caption{Convergence behaviors of the objective value ($g=g_1$, $\lambda=0.7$).}
    \label{fig:synthetic-CL-07}
\end{figure}

\begin{figure}[p]
    \centering
    \includegraphics[width=40em]{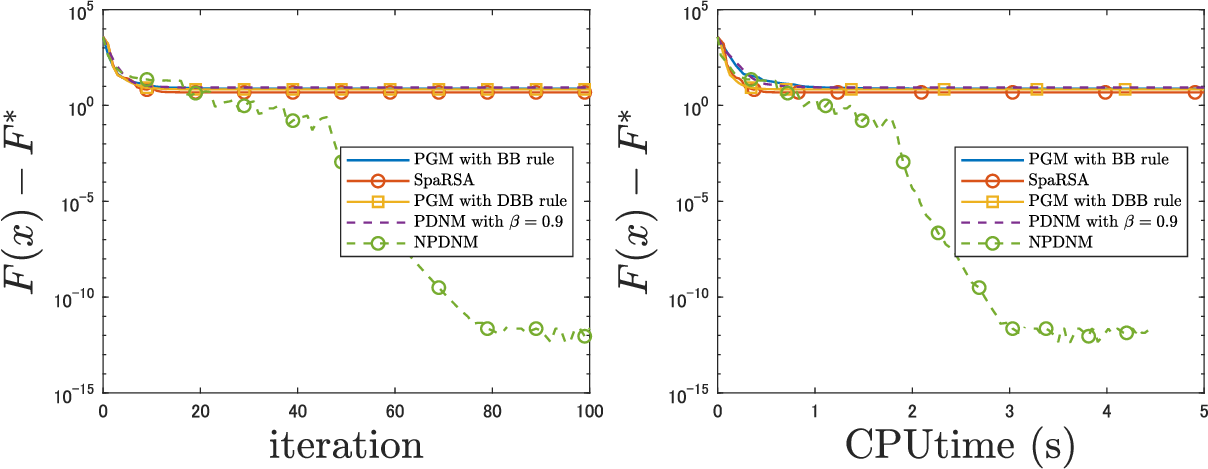}
    \caption{Convergence behaviors of the objective value ($g=g_2$, $\lambda=0.3$).}
    \label{fig:synthetic-TL-03}
    \vspace{20pt}
    \includegraphics[width=40em]{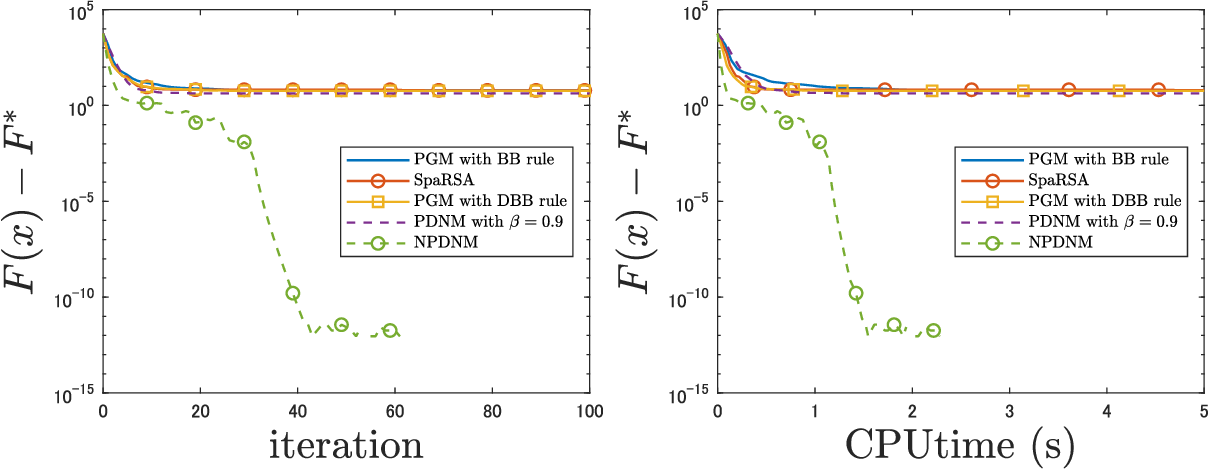}
    \caption{Convergence behaviors of the objective value ($g=g_2$, $\lambda=0.5$).}
    \label{fig:synthetic-TL-05}
    \vspace{20pt}
    \includegraphics[width=40em]{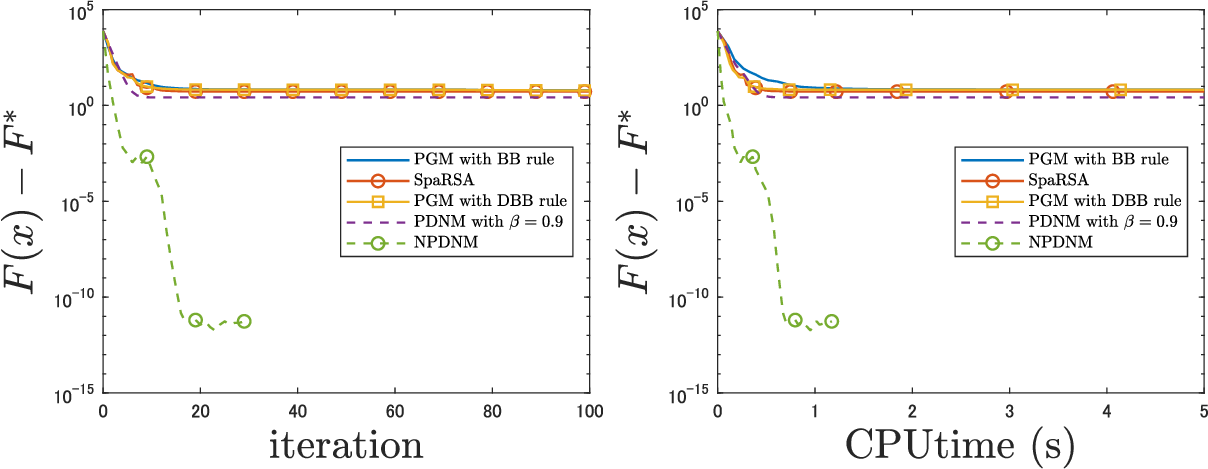}
    \caption{Convergence behaviors of the objective value ($g=g_2$, $\lambda=0.7$).}
    \label{fig:synthetic-TL-07}
\end{figure}

\subsection{Quadratic function with nearly diagonal Hessian}\label{subsec:quadratic}
In this subsection, we consider the problems \eqref{problem:general} where the smooth term $f$ is a strongly convex quadratic function $\frac{1}{2}x^\top Qx+l^\top x$.
Here, $Q=\lambda Q_1+(1-\lambda)Q_2$, $Q_2=\frac{A^\top A}{m}$, and $l=Qe$, where $A$ was a $m\times n$ matrix whose elements were independently drawn from $\mathrm{N}(0,1)$, $Q_1$ was a $n\times n$ diagonal matrix whose diagonal elements were independently drawn from $\mathrm{U}(0,10)$, $e$ was a $n$-dimensional vector whose elements were independently drawn from $\mathrm{N}(0,1)$, $m=n=5000$, and $\lambda=0.3, 0.5, 0.7$.
As $\lambda$ increased, the condition number of $Q$ got worse, like $15.17$, $19.25$, and $32.74$, whereas $Q$ approached a diagonal matrix.

The first experiments shown in Figures \ref{fig:synthetic-LASSO-03}, \ref{fig:synthetic-LASSO-05}, and \ref{fig:synthetic-LASSO-07} dealt with a convex problem with $g(x)=\|x\|_1$, which satisfies the assumptions of Theorem \ref{thm:r-linear-convergence}.
As Theorem \ref{thm:r-linear-convergence} implies, as $Q$ got closer to a diagonal matrix, the performance of the PDNMs outperformed the other methods.
We would like to emphasize that while the convergence of the PGM-type algorithms slowed down as the condition number increased, that of the PDNMs sped up.
The PDNM with $\beta=1.1$ did not satisfy the assumptions of Theorem \ref{thm:r-linear-convergence}, but the performance was not different from the PDNM with $\beta=1$.
We can find that the nonmonotone line search was effective as $Q$ approached a diagonal matrix.

Figures \ref{fig:synthetic-CL-03}-\ref{fig:synthetic-CL-07} and \ref{fig:synthetic-TL-03}-\ref{fig:synthetic-TL-07} show the convergence behaviors in the experiments that solved nonconvex problems where $g$ is the capped $\ell_1$ norm $g_1$ and trimmed $\ell_1$ norm $g_2$, respectively.
Unlike the previous convex case, the PDNM did not outperform the PGM-type algorithms even when $Q$ got close to a diagonal matrix.
However, the PDNM combined with the nonmonotone line search (NPDNM) was able to find better solutions than the other algorithms in each situation.
Moreover, the convergence of the NPDNM sped up as $Q$ approached a diagonal matrix.

\begin{figure}[t]
    \centering
    \includegraphics[width=40em]{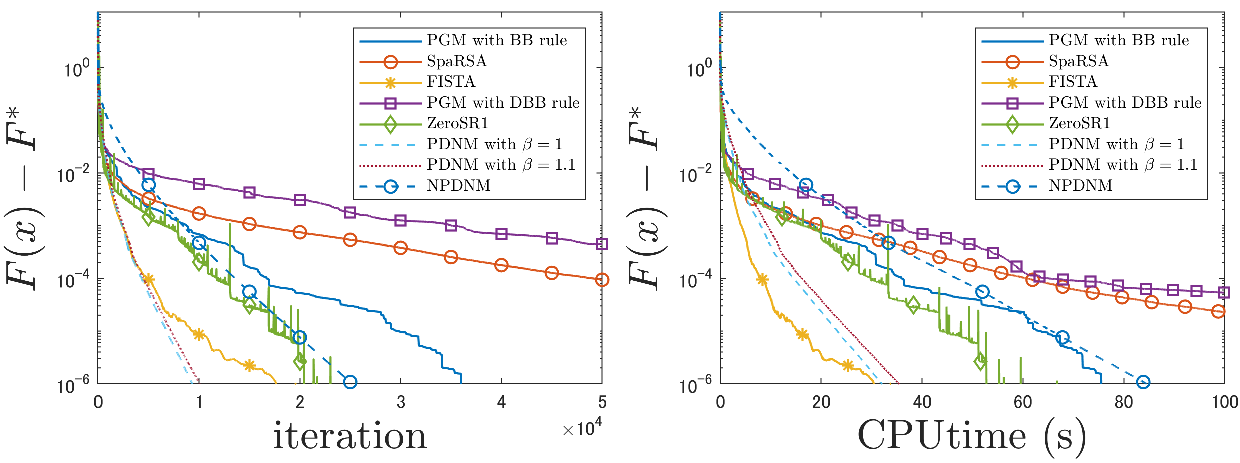}
    \caption{Convergence behaviors of the objective value of sparse regression problem with $g(x)=\frac{1}{m}\|x\|_1$.}
    \label{fig:reg-LASSO}
    \vspace{20pt}
    \includegraphics[width=40em]{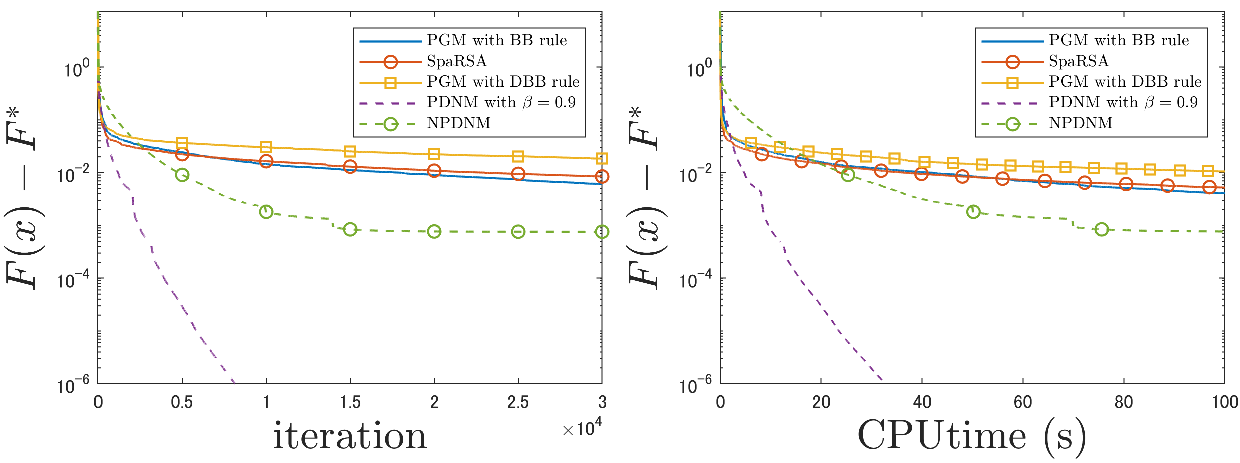}
    \caption{Convergence behaviors of the objective value of sparse regression problem with $g=\frac{1}{m}g_2$.}
    \label{fig:reg-TL}
\end{figure}

\subsection{Sparse regression problem}\label{subsec:regression}
Sparse regression problems, namely, the problems \eqref{problem:general} with $f(x)=\frac{1}{2m}\|b-Ax\|_2^2$ are considered.
Handwritten digit data set obtained from the MATLAB function digitTrain4DArrayData is used.
The sample size $m$ is $5000$ and the number of explanatory variables $n$ is $719$ after removing the meaningless ones.
Since the reciprocal of the condition number of $A^\top A$ is $7.7\times10^{-42}$, the problem is highly ill-conditioned.
We note that $A^\top A$ is never close to a diagonal matrix.
Actually, $\frac{\sigma}{\tau}=4.4\times10^{-42}$.

Figure \ref{fig:reg-LASSO} shows the convergence behaviors in the experiment that solved convex problems where $g$ is the $\ell_1$ norm.
Combining the PDNM with the nonmonotone line search was not effective for this problem.
Since $A^\top A$ is not close to a diagonal matrix, this is consistent with the results of the previous subsection.
The PDNM was the fastest in terms of iterations but fell a little behind the FISTA in terms of CPUtime.

A nonconvex problem where $g$ is the trimmed $\ell_1$ norm $g_2$ was also treated.
The convergence behaviors are shown in Figure \ref{fig:reg-TL}.
The PDNMs succeeded in finding better solutions than the other algorithms.
In particular, the PDNM performed significantly.

\begin{figure}[t]
    \centering
    \includegraphics[width=40em]{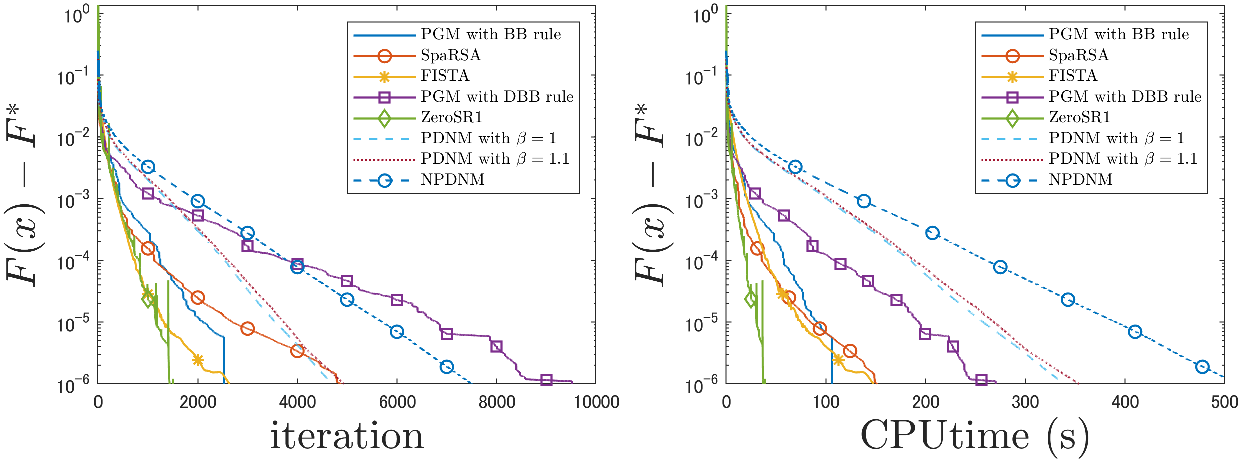}
    \caption{Convergence behaviors of the objective value of sparse logistic regression problem with $g(x)=\frac{1}{m}\|x\|_1$.}
    \label{fig:logis-LASSO}
    \vspace{20pt}
    \includegraphics[width=40em]{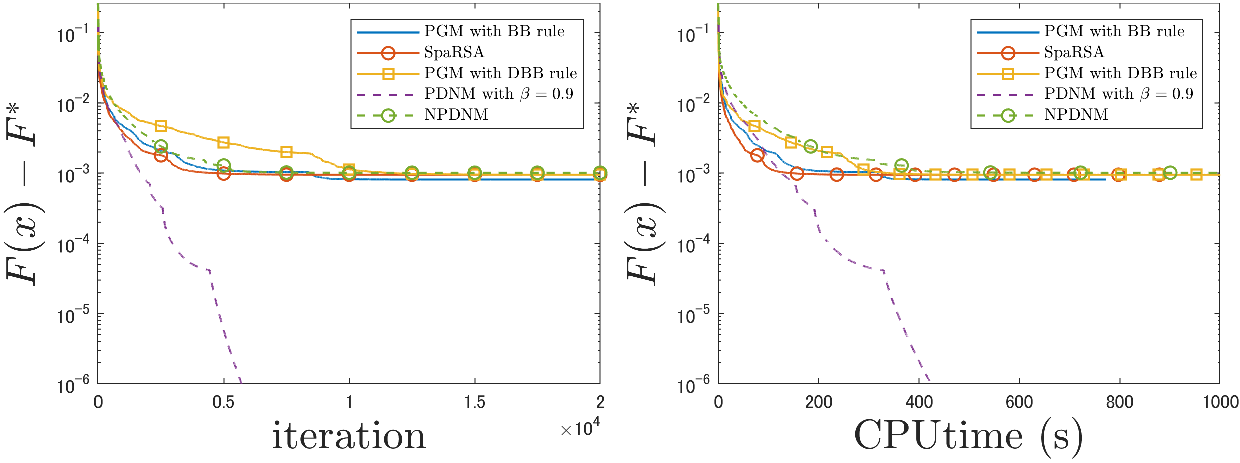}
    \caption{Convergence behaviors of the objective value of sparse logistic regression problem with $g=\frac{1}{m}g_2$.}
    \label{fig:logis-TL}
\end{figure}

\subsection{Sparse logistic regression}\label{subsec:logistic}
The Hessian of $f$ is constant in the previous two subsections.
That is, static metrics are used in the PDNMs.
In this subsection, we consider the following sparse logistic regression problems:
\begin{align}
    \underset{x\in\mathbb{R}^n}{\mbox{minimize}} & \quad \sum_{i\in[m]}\log\left(1+\exp(-b_ia_i^\top x)\right)+\frac{\gamma}{2}\|x\|_2^2+g(x),
\end{align}
where $a_i\in\mathbb{R}^{n}$ is the explanatory variable of the $i$-th sample, $b_i\in\{-1,1\}$ is its label, $g(x)$ is either $\frac{1}{m}\|x\|_1$ or $\frac{1}{m}g_2(x)$, and $\gamma=10^{-2}$.
The Hessian of $f$ is not constant in this setting, namely, variable metrics are used.
We used the same data set as in the previous subsection, with labels of $1$ for samples whose written numbers were $1$, $2$, $4$, or $7$, and $-1$ for those whose numbers were not.

In the convex case, Figure \ref{fig:logis-LASSO} shows that unfortunately, the PDNMs were not effective for this optimization problem.
However, for the nonconvex case, we see from Figure \ref{fig:logis-TL} that the PDNM found a better solution than the other algorithms as in the previous subsection.

\section{Concluding remarks}\label{sec:conclusion}
In this paper, we have proposed PNMs with a new diagonal metric, the PDNM and the NPDNM, based on a simple idea.
Theory suggests that our proposed algorithm PDNM outperforms PGMs in solving convex optimization problems that satisfy certain conditions.
Although there exist numerical experiments suggesting the effectiveness of diagonal metrics, there have been no theoretical results until now.
Our numerical experiments show that the PDNMs are effective algorithms, especially for non-convex problems.
However, no results have yet been obtained that theoretically suggest an advantage of the PDNMs for nonconvex cases.
It is a future task to show this.

\section*{Acknowledgments}
This work was supported in part by JSPS KAKENHI Grant 20K14986 and 23K10999.

\begin{appendices}

\section{Lemma}\label{sec:lemma}
The following lemma is used to show the global convergence results of the PDNMs.

\begin{lemma}\label{lem:global-convergence}
Let $M\ge1,~ m_1, m_2, \alpha>0,~ x^0\in\dom g$ and
\begin{align}
    m_1I\preceq H_t\preceq m_2I,\quad x^{t+1}\in\prox_g^{H_t}(x^t-H_t^{-1}\nabla f(x^t))
\end{align}
for all $t\ge0$.
If it holds that
\begin{align}
    F(x^{t+1})\le\max_{(t-M+1)_+\le s\le t} F(x^s)-\frac{\alpha}{2}\|x^{t+1}-x^t\|_{H_t}^2
\end{align}
and there exist a set $\mathcal{D}\supset\{x^t\}_{t=0}^\infty$ such that $F$ is uniformly continuous on $\mathcal{D}$, then $\|x^{t+1}-x^t\|_2\to0$ and any accumulation point of $\{x^t\}_{t=0}^\infty$ is a d-stationary point of \eqref{problem:general}, where the assumption on uniform continuity is not necessary when $M=1$.

Moreover, it holds that $\lim_{t\to\infty}\dist(x^t, \mathcal{X}^*)=0$ when $\{x^t\}_{t=0}^\infty$ is bounded, where $\mathcal{X}^*$ is the set of d-stationary points of \eqref{problem:general}.
\end{lemma}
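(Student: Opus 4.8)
The plan is to establish the three claims in order---first $\|x^{t+1}-x^t\|_2\to0$, then that every accumulation point of $\{x^t\}_{t=0}^\infty$ is d-stationary, and finally the statement on $\dist(x^t,\mathcal{X}^*)$---using only the scaling bounds $m_1I\preceq H_t\preceq m_2I$, boundedness below of $F$, the sufficient decrease condition, and, when $M>1$, uniform continuity of $F$ on $\mathcal{D}$.

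When $M=1$ the sufficient decrease condition is monotone, so $\{F(x^t)\}$ is non-increasing and bounded below, hence convergent; telescoping gives $\tfrac{\alpha}{2}\sum_t\|x^{t+1}-x^t\|_{H_t}^2\le F(x^0)-\lim_t F(x^t)<\infty$, which with $H_t\succeq m_1I$ yields $\|x^{t+1}-x^t\|_2\to0$, no continuity hypothesis needed. For general $M$ I would run the Grippo--Lampariello--Lucidi bookkeeping: for each $t$ pick $\ell(t)\in\{(t-M+1)_+,\dots,t\}$ attaining $\max_{(t-M+1)_+\le s\le t}F(x^s)$ and set $V_t\coloneqq F(x^{\ell(t)})$. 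A two-case check (according to whether or not $\ell(t+1)=t+1$) shows $V_{t+1}\le V_t$, so $V_t\downarrow V^*$ for some $V^*\in\mathbb{R}$ by boundedness below of $F$. The sufficient decrease condition at iteration $\ell(t)-1$, together with $\max_{(\ell(t)-M)_+\le s\le\ell(t)-1}F(x^s)=V_{\ell(t)-1}$, gives $\tfrac{\alpha}{2}\|x^{\ell(t)}-x^{\ell(t)-1}\|_{H_{\ell(t)-1}}^2\le V_{\ell(t)-1}-V_t\to0$, hence $\|x^{\ell(t)}-x^{\ell(t)-1}\|_2\to0$. Inducting on $j\ge0$: the sufficient decrease condition at iteration $\ell(t)-j-1$ bounds $\tfrac{\alpha}{2}\|x^{\ell(t)-j}-x^{\ell(t)-j-1}\|_{H_{\ell(t)-j-1}}^2$ by $V_{\ell(t)-j-1}-F(x^{\ell(t)-j})$, which tends to $0$ once $F(x^{\ell(t)-j})\to V^*$ is known (the case $j=0$ being $V_t\to V^*$), so $\|x^{\ell(t)-j}-x^{\ell(t)-j-1}\|_2\to0$, and then uniform continuity of $F$ on $\mathcal{D}$ propagates $F(x^{\ell(t)-j-1})\to V^*$. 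Thus $F(x^{\ell(t)-j})\to V^*$ for every fixed $j\ge0$; since each index $t$ equals $\ell(t+M-1)-j$ for some $j\in\{0,\dots,M-1\}$ and there are only finitely many such shifts, $F(x^t)\to V^*$. Finally, the sufficient decrease condition at iteration $t$ gives $\tfrac{\alpha}{2}\|x^{t+1}-x^t\|_{H_t}^2\le V_t-F(x^{t+1})\to0$, so $\|x^{t+1}-x^t\|_2\to0$.

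For the second claim, let $\bar x$ be an accumulation point with $x^{t_k}\to\bar x$. Since $F(x^t)\le F(x^0)<\infty$ for all $t$ and $F$ is lower semicontinuous ($f$ continuous, $g$ proper closed), $F(\bar x)<\infty$, so $\bar x\in\dom g$; and $\|x^{t+1}-x^t\|_2\to0$ gives $x^{t_k+1}\to\bar x$. Passing to a further subsequence I may assume $H_{t_k}\to\bar H$ with $m_1I\preceq\bar H\preceq m_2I$. Expanding the scaled proximal step, optimality of $x^{t_k+1}$ reads, for all $y$,
\[
\begin{aligned}
&g(y)+\nabla f(x^{t_k})^\top(y-x^{t_k})+\tfrac{1}{2}\|y-x^{t_k}\|_{H_{t_k}}^2\\
&\quad\ge g(x^{t_k+1})+\nabla f(x^{t_k})^\top(x^{t_k+1}-x^{t_k})+\tfrac{1}{2}\|x^{t_k+1}-x^{t_k}\|_{H_{t_k}}^2.
\end{aligned}
\]
Fix $d\in\mathcal{F}_g(\bar x)$ and $\eta>0$ small enough that $\bar x+\eta d\in\dom g$, set $y=\bar x+\eta d$, and let $k\to\infty$: the left-hand side tends to $g(\bar x+\eta d)+\eta\nabla f(\bar x)^\top d+\tfrac{\eta^2}{2}\|d\|_{\bar H}^2$, while the $\liminf$ of the right-hand side is at least $g(\bar x)$ (using $\liminf_k g(x^{t_k+1})\ge g(\bar x)$ by lower semicontinuity, together with $\|x^{t_k+1}-x^{t_k}\|_2\to0$ and boundedness of $\nabla f(x^{t_k})$ along the convergent subsequence). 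Hence $g(\bar x+\eta d)-g(\bar x)+\eta\nabla f(\bar x)^\top d+\tfrac{\eta^2}{2}\|d\|_{\bar H}^2\ge0$; dividing by $\eta$ and letting $\eta\searrow0$ gives $g'(\bar x;d)+\nabla f(\bar x)^\top d\ge0$, that is, $F'(\bar x;d)\ge0$. As $d\in\mathcal{F}_g(\bar x)$ was arbitrary, $\bar x\in\mathcal{X}^*$.

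For the last claim, if $\{x^t\}_{t=0}^\infty$ is bounded but $\dist(x^t,\mathcal{X}^*)$ does not tend to $0$, some subsequence stays at distance at least $\varepsilon>0$ from $\mathcal{X}^*$; a convergent sub-subsequence of it has its limit in $\mathcal{X}^*$ by the previous paragraph, contradicting the distance bound, so $\dist(x^t,\mathcal{X}^*)\to0$. I expect the main obstacle to be the nonmonotone-line-search induction of the second paragraph---keeping the index shifts $\ell(\cdot)-j$ straight, invoking uniform continuity at exactly the right places, and separately handling $M=1$ where it is unavailable---whereas the limiting argument for d-stationarity is routine once one passes to a subsequence along which $H_{t_k}$ converges and uses only lower semicontinuity of $g$.
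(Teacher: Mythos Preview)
Your proposal is correct and follows essentially the same route as the paper: the Grippo--Lampariello--Lucidi bookkeeping on $V_t=F(x^{\ell(t)})$ with an induction on the shift $j$ (uniform continuity propagating $F(x^{\ell(t)-j})\to V^*$), then the limiting argument on the subproblem optimality to get d-stationarity, and finally the standard subsequence contradiction for $\dist(x^t,\mathcal{X}^*)\to0$. The only cosmetic differences are that the paper bounds the quadratic terms directly via $m_1I\preceq H_t\preceq m_2I$ rather than extracting a convergent subsequence $H_{t_k}\to\bar H$, and it deduces $\|x^{t+1}-x^t\|_2\to0$ from the shifted increments $\|x^{\ell(t)-j}-x^{\ell(t)-j-1}\|_2\to0$ rather than from $V_t-F(x^{t+1})\to0$ as you do; both variants work.
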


\begin{proof}
Note that it holds that $x^t\in\{x\mid F(x)\le F(x^0)<\infty\}$ for any $t\ge0$.
Let $l(t)\in\argmax_{(t-M+1)_+\le s\le t} F(x^s)$, then we obtain that
\begin{align}
    F(x^{l(t+1)}) &=\max_{(t+1-M+1)_+\le s\le t+1} F(x^s)\\
    &=\max\left\{\max_{(t-M+2)_+\le s\le t} F(x^s),F(x^{t+1})\right\}\\
    &\le\max\left\{\max_{(t-M+1)_+\le s\le t} F(x^s),\max_{(t-M+1)_+\le s\le t} F(x^s)-\frac{\alpha}{2}\|x^{t+1}-x^t\|_{H_t}^2\right\}\\
    &=F(x^{l(t)}).
\end{align}
Since $F$ is bounded below, there exists $\overline{F}\in\mathbb{R}$ such that $\{F(x^{l(t)})\}_{t=0}^\infty$ converges to $\overline{F}$.
Suppose that $\lim_{t\to\infty}F(x^{l(t)-j})=\overline{F}$ for $j\ge0$.
It follows from $m_1I\preceq H_{l(t)-j-1}$ that
\begin{align}
    F(x^{l(t)-j}) &\le F(x^{l(l(t)-j-1)})-\frac{\alpha}{2}\|x^{l(t)-j}-x^{l(t)-j-1}\|_{H_{l(t)-j-1}}^2\\
    &\le F(x^{l(l(t)-j-1)})-\frac{\alpha m_1}{2}\|x^{l(t)-j}-x^{l(t)-j-1}\|_2^2,
\end{align}
and hence we have $\|x^{l(t)-j}-x^{l(t)-j-1}\|_2\to0$.
We see from the uniform continuity that
\begin{align}
    \overline{F} &=\lim_{t\to\infty}F(x^{l(t)-j})\\
    &=\lim_{t\to\infty}F(x^{l(t)-j-1}+x^{l(t)-j}-x^{l(t)-j-1})\\
    &=\lim_{t\to\infty}F(x^{l(t)-(j+1)}).
\end{align}
As a result, It holds that $\lim_{t\to\infty}F(x^{l(t)-j})=\overline{F}$ for all $j\ge0$.
By noting that $t-M+1\le l(t)\le t$, we obtain that
\begin{align}\label{eq:convergences}
    \lim_{t\to\infty}F(x^t)=\overline{F},\quad \lim_{t\to\infty}\|x^{t+1}-x^t\|_2=0.
\end{align}
If $M=1$, it follows from $m_1I\preceq H_t$ that
\begin{align}
    F(x^{t+1})\le F(x^t)-\frac{\alpha}{2}\|x^{t+1}-x^t\|_{H_t}^2\le F(x^t)-\frac{\alpha m_1}{2}\|x^{t+1}-x^t\|_2^2.
\end{align}
Since $F$ is bounded below, the equations \eqref{eq:convergences} hold with $\overline{F}=\inf_{t\ge0}F(x^t)\in\mathbb{R}$.
In this case, the uniform continuity is not used.

Let $x^*$ be an accumulation point of $\{x^t\}$ and $\{x^{t_i}\}$ be a subsequence that converges to $x^*$.
Note that $x^*\in\dom g$ because $\{x^t\}\subset\{x\mid F(x)\le F(x^0)\}\subset\dom g$ and $F$ is a closed function.
For any $d\in\mathcal{F}_g(x^*)$, it follows from the optimality of $x^{t_i+1}$ that
\begin{align}
    \nabla f(x^{t_i})^\top x^{t_i+1}+\frac{1}{2}\|x^{t_i+1}-x^{t_i}\|_{H_{t_i}}^2+g(x^{t_i+1})\le\nabla f(x^{t_i})^\top(x^*+\eta d)+\frac{1}{2}\|x^*+\eta d-x^{t_i}\|_{H_{t_i}}^2+g(x^*+\eta d)
\end{align}
for sufficiently small $\eta>0$.
Since it holds that $m_1I\preceq H_t\preceq m_2I$, we see that
\begin{align}
    \nabla f(x^{t_i})^\top x^{t_i+1}+\frac{m_1}{2}\|x^{t_i+1}-x^{t_i}\|_2^2+g(x^{t_i+1})\le\nabla f(x^{t_i})^\top(x^*+\eta d)+\frac{m_2}{2}\|x^*+\eta d-x^{t_i}\|_2^2+g(x^*+\eta d)
\end{align}
by taking the limit $t_i\to\infty$.
From the continuity of $\nabla f$ and the lower semicontinuity of $g$, we obtain that
\begin{align}
    \eta\nabla f(x^*)^\top d+\frac{m_2\eta^2}{2}\|d\|_2^2+g(x^*+\eta d)-g(x^*)\ge0.
\end{align}
Dividing both sides by $\eta$ and taking the limit $\eta\to0$ give
\begin{align}
    F'(x^*;d)=\nabla f(x^*)^\top d+g'(x^*;d)\ge0,
\end{align}
which implies that $x^*$ is a d-stationary point.

Assuming further that $\{x^t\}$ is bounded, we can prove that $\lim_{t\to\infty}\dist(x^t, \mathcal{X}^*)=0$ similarly to the proof of Corollary 1 of \citet{razaviyayn2013unified}.
\end{proof}

\end{appendices}

\bibliography{reference.bib}

\begin{thebibliography}{35}
\providecommand{\natexlab}[1]{#1}
\providecommand{\url}[1]{\texttt{#1}}
\expandafter\ifx\csname urlstyle\endcsname\relax
  \providecommand{\doi}[1]{doi: #1}\else
  \providecommand{\doi}{doi: \begingroup \urlstyle{rm}\Url}\fi

\bibitem[Barzilai and Borwein(1988)]{barzilai1988two}
Jonathan Barzilai and Jonathan~M Borwein.
\newblock Two-point step size gradient methods.
\newblock \emph{IMA journal of numerical analysis}, 8\penalty0 (1):\penalty0
  141--148, 1988.

\bibitem[Beck(2017)]{beck2017first}
Amir Beck.
\newblock \emph{First-order methods in optimization}.
\newblock SIAM, 2017.

\bibitem[Beck and Teboulle(2009)]{beck2009fast}
Amir Beck and Marc Teboulle.
\newblock A fast iterative shrinkage-thresholding algorithm for linear inverse
  problems.
\newblock \emph{SIAM journal on imaging sciences}, 2\penalty0 (1):\penalty0
  183--202, 2009.

\bibitem[Becker and Fadili(2012)]{becker2012quasi}
Stephen Becker and Jalal Fadili.
\newblock A quasi-newton proximal splitting method.
\newblock \emph{Advances in neural information processing systems}, 25, 2012.

\bibitem[Becker et~al.(2019)Becker, Fadili, and Ochs]{becker2019quasi}
Stephen Becker, Jalal Fadili, and Peter Ochs.
\newblock On quasi-newton forward-backward splitting: Proximal calculus and
  convergence.
\newblock \emph{SIAM Journal on Optimization}, 29\penalty0 (4):\penalty0
  2445--2481, 2019.

\bibitem[Bonettini et~al.(2016)Bonettini, Loris, Porta, and
  Prato]{bonettini2016variable}
Silvia Bonettini, Ignace Loris, Federica Porta, and Marco Prato.
\newblock Variable metric inexact line-search-based methods for nonsmooth
  optimization.
\newblock \emph{SIAM journal on optimization}, 26\penalty0 (2):\penalty0
  891--921, 2016.

\bibitem[Byrd et~al.(2016)Byrd, Nocedal, and Oztoprak]{byrd2016inexact}
Richard~H Byrd, Jorge Nocedal, and Figen Oztoprak.
\newblock An inexact successive quadratic approximation method for {L}-1
  regularized optimization.
\newblock \emph{Mathematical Programming}, 157\penalty0 (2):\penalty0 375--396,
  2016.

\bibitem[Chen and Rockafellar(1997)]{chen1997convergence}
George~HG Chen and R~Tyrrell Rockafellar.
\newblock Convergence rates in forward--backward splitting.
\newblock \emph{SIAM Journal on Optimization}, 7\penalty0 (2):\penalty0
  421--444, 1997.

\bibitem[Duchi et~al.(2011)Duchi, Hazan, and Singer]{duchi2011adaptive}
John Duchi, Elad Hazan, and Yoram Singer.
\newblock Adaptive subgradient methods for online learning and stochastic
  optimization.
\newblock \emph{Journal of machine learning research}, 12\penalty0 (7), 2011.

\bibitem[Gilbert and Lemar{\'e}chal(1989)]{gilbert1989some}
Jean~Charles Gilbert and Claude Lemar{\'e}chal.
\newblock Some numerical experiments with variable-storage quasi-newton
  algorithms.
\newblock \emph{Mathematical programming}, 45\penalty0 (1-3):\penalty0
  407--435, 1989.

\bibitem[Gill and Murray(1979)]{gill1979conjugate}
Philip~E Gill and Walter Murray.
\newblock Conjugate-gradient methods for large-scale nonlinear optimization.
\newblock Technical report, BUSAR, 1979.

\bibitem[Giselsson and Boyd(2014)]{giselsson2014diagonal}
Pontus Giselsson and Stephen Boyd.
\newblock Diagonal scaling in douglas-rachford splitting and admm.
\newblock In \emph{53rd IEEE Conference on Decision and Control}, pages
  5033--5039. IEEE, 2014.

\bibitem[Gong et~al.(2013)Gong, Zhang, Lu, Huang, and Ye]{gong2013general}
Pinghua Gong, Changshui Zhang, Zhaosong Lu, Jianhua Huang, and Jieping Ye.
\newblock A general iterative shrinkage and thresholding algorithm for
  non-convex regularized optimization problems.
\newblock In \emph{international conference on machine learning}, pages 37--45.
  PMLR, 2013.

\bibitem[Grippo and Sciandrone(2002)]{grippo2002nonmonotone}
Luigi Grippo and Marco Sciandrone.
\newblock Nonmonotone globalization techniques for the barzilai-borwein
  gradient method.
\newblock \emph{Computational Optimization and Applications}, 23:\penalty0
  143--169, 2002.

\bibitem[Grippo et~al.(1986)Grippo, Lampariello, and
  Lucidi]{grippo1986nonmonotone}
Luigi Grippo, Francesco Lampariello, and Stephano Lucidi.
\newblock A nonmonotone line search technique for newton’s method.
\newblock \emph{SIAM journal on Numerical Analysis}, 23\penalty0 (4):\penalty0
  707--716, 1986.

\bibitem[Huang et~al.(2015)Huang, Liu, Shi, Van~Huffel, and
  Suykens]{huang2015two}
Xiaolin Huang, Yipeng Liu, Lei Shi, Sabine Van~Huffel, and Johan~AK Suykens.
\newblock Two-level $\ell_1$ minimization for compressed sensing.
\newblock \emph{Signal Processing}, 108:\penalty0 459--475, 2015.

\bibitem[Kanzow and Lechner(2021)]{kanzow2021globalized}
Christian Kanzow and Theresa Lechner.
\newblock Globalized inexact proximal newton-type methods for nonconvex
  composite functions.
\newblock \emph{Computational Optimization and Applications}, 78:\penalty0
  377--410, 2021.

\bibitem[Karimi and Vavasis(2017)]{karimi2017imro}
Sahar Karimi and Stephen Vavasis.
\newblock Imro: A proximal quasi-newton method for solving $\ell_1$-regularized
  least squares problems.
\newblock \emph{SIAM Journal on Optimization}, 27\penalty0 (2):\penalty0
  583--615, 2017.

\bibitem[Lee and Wright(2019)]{lee2019inexact}
Ching-pei Lee and Stephen~J Wright.
\newblock Inexact successive quadratic approximation for regularized
  optimization.
\newblock \emph{Computational Optimization and Applications}, 72:\penalty0
  641--674, 2019.

\bibitem[Lee et~al.(2014)Lee, Sun, and Saunders]{lee2014proximal}
Jason~D Lee, Yuekai Sun, and Michael~A Saunders.
\newblock Proximal {Newton}-type methods for minimizing composite functions.
\newblock \emph{SIAM Journal on Optimization}, 24\penalty0 (3):\penalty0
  1420--1443, 2014.

\bibitem[Li et~al.(2022)Li, Wang, and Huang]{li2022diagonal}
Donghui Li, Xiaozhou Wang, and Jiajian Huang.
\newblock Diagonal {BFGS} updates and applications to the limited memory {BFGS}
  method.
\newblock \emph{Computational Optimization and Applications}, 81\penalty0
  (3):\penalty0 829--856, 2022.

\bibitem[Li et~al.(2017)Li, Andersen, and Vandenberghe]{li2017inexact}
Jinchao Li, Martin~S Andersen, and Lieven Vandenberghe.
\newblock Inexact proximal newton methods for self-concordant functions.
\newblock \emph{Mathematical Methods of Operations Research}, 85:\penalty0
  19--41, 2017.

\bibitem[Liu and Takeda(2022)]{liu2022inexact}
Tianxiang Liu and Akiko Takeda.
\newblock An inexact successive quadratic approximation method for a class of
  difference-of-convex optimization problems.
\newblock \emph{Computational Optimization and Applications}, 82\penalty0
  (1):\penalty0 141--173, 2022.

\bibitem[Luo et~al.(2013)Luo, Wang, and Zhang]{luo2013new}
Ziyan Luo, Yingnan Wang, and Xianglilan Zhang.
\newblock New improved penalty methods for sparse reconstruction based on
  difference of two norms.
\newblock \emph{Technical report}, 2013.
\newblock \doi{10.13140/RG.2.1.3256.3369}.

\bibitem[Nakayama et~al.(2021)Nakayama, Narushima, and
  Yabe]{nakayama2021inexact}
Shummin Nakayama, Yasushi Narushima, and Hiroshi Yabe.
\newblock Inexact proximal memoryless quasi-newton methods based on the broyden
  family for minimizing composite functions.
\newblock \emph{Computational Optimization and Applications}, 79:\penalty0
  127--154, 2021.

\bibitem[Nakayama et~al.(2023)Nakayama, Narushima, and
  Yabe]{nakayama2023inexact}
Shummin Nakayama, Yasushi Narushima, and Hiroshi Yabe.
\newblock Inexact proximal dc newton-type method for nonconvex composite
  functions.
\newblock \emph{Computational Optimization and Applications}, pages 1--30,
  2023.

\bibitem[Parikh and Boyd(2014)]{parikh2014proximal}
Neal Parikh and Stephen Boyd.
\newblock Proximal algorithms.
\newblock \emph{Foundations and trends{\textregistered} in Optimization},
  1\penalty0 (3):\penalty0 127--239, 2014.

\bibitem[Park et~al.(2019)Park, Dhar, Boyd, and Shah]{park2019variable}
Youngsuk Park, Sauptik Dhar, Stephen Boyd, and Mohak Shah.
\newblock Variable metric proximal gradient method with diagonal
  barzilai-borwein stepsize.
\newblock \emph{arXiv preprint arXiv:1910.07056}, 2019.

\bibitem[Park et~al.(2020)Park, Dhar, Boyd, and Shah]{park2020variable}
Youngsuk Park, Sauptik Dhar, Stephen Boyd, and Mohak Shah.
\newblock Variable metric proximal gradient method with diagonal
  {Barzilai}--{Borwein} stepsize.
\newblock In \emph{2020 IEEE International Conference on Acoustics, Speech and
  Signal Processing (ICASSP)}, pages 3597--3601. IEEE, 2020.

\bibitem[Pock and Chambolle(2011)]{pock2011diagonal}
Thomas Pock and Antonin Chambolle.
\newblock Diagonal preconditioning for first order primal-dual algorithms in
  convex optimization.
\newblock In \emph{2011 International Conference on Computer Vision}, pages
  1762--1769. IEEE, 2011.

\bibitem[Razaviyayn et~al.(2013)Razaviyayn, Hong, and
  Luo]{razaviyayn2013unified}
Meisam Razaviyayn, Mingyi Hong, and Zhi-Quan Luo.
\newblock A unified convergence analysis of block successive minimization
  methods for nonsmooth optimization.
\newblock \emph{SIAM Journal on Optimization}, 23\penalty0 (2):\penalty0
  1126--1153, 2013.

\bibitem[Scheinberg and Tang(2016)]{scheinberg2016practical}
Katya Scheinberg and Xiaocheng Tang.
\newblock Practical inexact proximal quasi-newton method with global complexity
  analysis.
\newblock \emph{Mathematical Programming}, 160:\penalty0 495--529, 2016.

\bibitem[Wright et~al.(2009)Wright, Nowak, and Figueiredo]{wright2009sparse}
Stephen~J Wright, Robert~D Nowak, and M{\'a}rio~AT Figueiredo.
\newblock Sparse reconstruction by separable approximation.
\newblock \emph{IEEE Transactions on signal processing}, 57\penalty0
  (7):\penalty0 2479--2493, 2009.

\bibitem[Zhang(2010)]{zhang2010analysis}
Tong Zhang.
\newblock Analysis of multi-stage convex relaxation for sparse regularization.
\newblock \emph{Journal of Machine Learning Research}, 11\penalty0 (3), 2010.

\bibitem[Zhu et~al.(1999)Zhu, Nazareth, and Wolkowicz]{zhu1999quasi}
Mingfa Zhu, John~Lawrence Nazareth, and Henry Wolkowicz.
\newblock The quasi-cauchy relation and diagonal updating.
\newblock \emph{SIAM Journal on Optimization}, 9\penalty0 (4):\penalty0
  1192--1204, 1999.

\end{thebibliography}
\bibliographystyle{plainnat}

\end{document}